\documentclass[10pt,a4paper]{article}
\usepackage{amsmath}
\usepackage{amssymb,bm}
\usepackage{amsbsy}
\usepackage{amsfonts}
\usepackage{graphicx}
\usepackage{sectsty}
\usepackage{verbatim}
\usepackage{enumitem}
\usepackage{hyperref}
\usepackage{ifthen}
\usepackage{tikz}
\usepackage{psfrag}
\usepackage[titletoc]{appendix}
\usepackage{color}
\usepackage[top=4cm,bottom=4.5cm,left=4cm,right=4cm]{geometry}

\def\RR{{\rm I\hspace{-0.50ex}R} }

\newcommand{\ve}{\varepsilon}
\def\d{\partial}

\DeclareMathOperator{\dist}{dist}
\DeclareMathOperator{\diam}{diam}

\newcommand{\abs}[1]{\lvert#1\rvert}
\newcommand{\norm}[1]{\left\|#1\right\|}
\newcommand{\set}[1]{\left\{#1\right\}}
\newcommand{\inner}[2]{\left<#1,#2\right>}
\newcommand{\supp}{\mathop{\mathrm{supp}}}

\newcommand{\blue}[1]{{}}

\newtheorem{theorem}{Theorem}[section]
\newtheorem{lemma}[theorem]{Lemma}

\newtheorem{corollary}[theorem]{Corollary}
\newtheorem{remark}[theorem]{Remark}
\newtheorem{definition}[theorem]{Definition}
\newtheorem{notation}{Notation}
\newtheorem{assumption}[theorem]{Assumption}

\newenvironment{proof}{{\textbf{Proof.}}}{\hfill \textbf{$\square$}\vspace{0.2cm}}

\title{A Direct Method for Photoacoustic Tomography with Inhomogeneous Sound Speed}

\author{Zakaria Belhachmi
\thanks{Laboratoire de Math\'ematiques LMIA, Universit\'e de Haute Alsace, 4, rue des Fr\`eres Lumi\`ere,
68200 Mulhouse, France.(\tt{zakaria.belhachmi@uha.fr})} \and
Thomas Glatz
\thanks{Computational Science Center, University of Vienna, Oskar-Morgenstern Platz 1, A-1090 Vienna, Austria.
(\tt{thomas.glatzl@univie.ac.at})}\and
Otmar Scherzer
\thanks{Computational Science Center, University of Vienna, Oskar-Morgenstern Platz 1, A-1090 Vienna, Austria,
and Johann Radon Institute for Computational and Applied Mathematics (RICAM),
Austrian Academy of Sciences, Altenbergerstra\ss{}e 69,
A-4040 Linz, Austria.(\tt{otmar.scherzer@univie.ac.at})}
}

\begin{document}

\maketitle

\begin{abstract}
The standard approach for photoacoustic imaging with variable speed of sound is \emph{time reversal}, which 
consists in solving a well-posed final-boundary value problem \blue{for the wave equation} backwards in time.
This paper investigates the iterative Landweber regularization algorithm, where convergence is guaranteed by 
standard regularization theory, notably also in cases of \emph{trapping} sound speed or for short measurement times.
We formulate and solve the direct and inverse problem on \blue{the \emph{whole} Euclidean space}, what is 
common in standard photoacoustic imaging, but not for time-reversal algorithms, \blue{where the problems are considered 
on a domain enclosed by the measurement devices}.
We \blue{formulate} both the direct and adjoint photoacoustic operator \blue{as the solution of} an interior and an exterior 
\blue{differential} equation \blue{which are coupled by transmission conditions}.
The prior is solved \blue{numerically} using a Galerkin scheme in space and finite difference discretization in time, 
while the latter \blue{consist in solving a} boundary integral equation. We therefore use a BEM-FEM approach 
for numerical solution of the forward operators. We analyze this method, prove convergence, and provide numerical 
tests. Moreover, we compare the approach to time reversal.
\end{abstract}
\noindent
\textbf{Keywords: }Photoacoustic Imaging, Variable Sound Speed, Regularization
\section*{Introduction}
Photoacoustic Imaging (PAI) is a novel imaging technique that allows for three dimensional imaging of 
small biological or medical specimens with high spatial resolution. 
It utilizes that an object expands and emits ultrasound waves when it is exposed to a short pulse
of electromagnetic radiation (see e.g. \cite{XuWan06,Wan09}). 
The emitted ultrasound \blue{pressure} is assumed to be proportional to the electromagnetic \emph{absorption density} 
which provides detailed anatomical and functional information. 
PAI aims for visualization of the absorption density by using measurements of the emitted wave outside of the object.

Opposed to the standard PAI problem \cite{KucKun08,Wan08,Wan09} we assume a spatially varying speed of sound. 
The underlying mathematical model of the wave propagation with spatially varying sound speed $c(x)$ is 
the \emph{acoustic wave equation}
\begin{align}\label{eq:wave_intro}
\begin{aligned}
\frac1{c^2(x)}y''(x,t) - \Delta y(x,t) & = 0 \text{ in } \RR^n  \times (0,\infty)\,,\\
        y(x,0) & = f(x)  \text{ in } \RR^n\,,\\
        y'(x,0) &= 0 \text{ in } \RR^n
\end{aligned}
\end{align}
where $f$ denotes the absorption density, which is proportional to the absorbed electromagnetic energy.

The \emph{inverse problem} of photoacoustics with variable wave speed consists in determining the function $f$ from 
measurement data $m$ of $y$ on a surface $\partial \Omega$ over time $(0,T)$. That is, it consists in solving the 
equation 
\begin{equation}\label{eq:meas_intro}
y \vert_\Sigma = m\;.
\end{equation}
With constant sound speed there exists a variety of analytical reconstruction formulas and numerical inversion techniques. 
We mention \cite{XuFenWan02,XuFenWan03,XuWan02a,XuWan02b,XuXuWan02}, see also the surveys \cite{KucKun08,Wan09,KucKun11,WanAna11}.
In the case of inhomogeneous sound speed, exact reconstruction formulas have been derived in \cite{AgrKuc07}.
A numerical method, providing approximations, is \emph{time reversal} \cite{Fin92,GruHalPalBur07,GruHofHalPalBur07,HriKucNgu08}, 
\blue{where a space-time boundary value problem for the wave equation is solved backwards in time}. 
Thereby the measurement data serve as Dirichlet boundary data, 
and the \emph{initial} conditions at final observation time $T$ are assumed to be identically zero. 
On top of the original algorithm of Fink \cite{Fin92}, the approach in \cite{SteUhl09} suggests a 
harmonic extension of the boundary data at time $T$ as initialization data.
The reconstruction obtained by time-reversal, lets say $f_T$, approximates the true solution $f$ as $T$ increases 
to infinity. The exact $f$ can be reconstructed if $y(\cdot,T) \equiv 0$, which happens, aside from 
trivial cases, only in odd dimensions and for homogeneous sound speed.
In all other cases the results are error-prone.
Moreover, this method produces approximations of $f$ only under \emph{non-trapping} conditions on $c$ (see \cite{HriKucNgu08,Hri09}).

Stefanov and Uhlmann \cite{SteUhl09} showed that if $\diam(\Omega) = 2T_0$ (where $\diam(\Omega)$ denotes the diameter of $\Omega$ 
with respect to the Riemannian metric $c^{-2}dx$), then an observation time $T > T_0$ is sufficient for a unique reconstruction of $f$. 
However, for stability of the inverse problem one needs longer measurement times and a non-trapping speed of sound condition. 
In fact, if the measurement time $T$ is larger than a certain threshold, depending on the longest geodesic in the metric induced by $c$, 
the algorithm presented in \cite{SteUhl09} provides a theoretically exact reconstruction in terms of a Neumann series 
that contains multiple, subsequent time reversal and forward propagation of the data term. 
A computational realization of this approach has been investigated in \cite{QiaSteUhlZha11}. 
This algorithm serves as a benchmark for our proposed algorithm.

The presented approach for PAI inversion with variable sound speed relies on linear regularization theory \cite{Gro84}. 
Specifically, we obtain regularized convergence to the \emph{minimum norm solution} even for short measurement times.
Moreover, we obtain a new reconstruction method that does not require an artificial cut-off of the measurement data, 
nor harmonic extension of the data at the final observation time $T$.
\blue{
The approach can be regarded as natural generalization of backprojection,
as it uses the adjoint operator in the backpropagation step of the reconstruction.
As in backprojection, the solution of the adjoint problem is computed on an unbounded spatial domain,
rather than solving a boundary value problem in time reversal.
}

For the numerical computations, we decouple the wave equation into an interior part (solved by finite element methods), 
and an exterior part (with homogeneous sound speed) that is rewritten in terms of a boundary integral formulation. 
We then solve the coupled BEM-FEM system numerically. 
Note that by this approach we use exact, non-reflecting boundary conditions \cite{AbbJolRodTer11,FalMon14} and therefore avoid the necessity of a \emph{perfectly matched layer} to deal with the cut-off outside the domain of interest.
\blue
{Since this formulation allows for inhomogeneous transmission conditions, 
it is also well-suited for the solution of the adjoint problem.}
The results are compared to conventional time reversal and the Neumann-series approach.

The paper is organized as follows: 
In Section \ref{sec:direct} we formulate the direct photoacoustic operator $L$ in a suitable choice of function spaces, 
and derive the adjoint.
In Section \ref{sec:Landweber} we give a short overview about Landweber iteration and review some regularization 
results regarding convergence and convergence rates in view of PAI reconstruction.
Moreover, we discuss the relation to the Neumann series approach in \cite{SteUhl09}.
In Section \ref{sec:fem_bem}, we formulate the transmission problem for the wave equation used for numerical computation of 
both the direct and the adjoint problem. We state the boundary relations used for taking into account the exterior domain. 
In addition, we briefly describe the used discretization.  
Finally, Section \ref{sec:results} provides a comparison of our reconstruction algorithm with the state-of-the-art reconstruction 
by time reversal and the \emph{enhanced} time reversal Neumann series method from \cite{SteUhl09}.

%
All along this paper we use the following notation and abbreviations:
\begin{notation}\label{not:spaces_trace}
Let $\Omega$ be a non-empty, open, bounded and connected domain in $\RR^n$ with $C^1$-boundary $\partial \Omega$.
The vector $\bm n(x)$, with $x\in \partial \Omega$, denotes the outward pointing unit normal vector.
We use the following sets
\begin{equation*}
 \Omega^+ := \RR^n \backslash \overline{\Omega},\;\Omega^- := \Omega \text{ and } \Sigma :=  \partial \Omega  \times (0,T)\;.
\end{equation*}
We use the following Hilbert spaces:
\begin{itemize}
\item $L^2(\Omega) = \set{\rho \in L^2(\RR^n) : \rho \equiv 0 \text{ in } \RR^n \backslash \Omega}$, with inner product
      \[\inner{\rho_1}{\rho_2}_{L^2(\Omega)}  = \int_{\RR^n} \rho_1(x) \rho_2(x)\,dx\;.\]
\item For $\hat{\Omega} = \Omega$ or $\RR^n$:
      \begin{itemize}
      \item Let $H_0^1(\hat{\Omega})$ be the closure of differentiable functions on $\RR^n$ with compact support in 
            $\hat{\Omega}$, associated with the inner product
            \[\inner{\rho_1}{\rho_2}_{H_0^1(\hat{\Omega})} = \int_{\RR^n} \nabla \rho_1(x)\cdot \nabla \rho_2(x)\,dx\;.\]
      \item $H^1(\hat{\Omega})$ denotes the standard Sobolev space with inner product
            \[\inner{\rho_1}{\rho_2}_{H^1(\hat{\Omega})} = \int_{\hat{\Omega}} \rho_1(x) \rho_2(x) + \nabla \rho_1(x)\cdot \nabla \rho_2(x)\,dx\;.\]
\end{itemize}
\item $L^2(\partial \Omega)$ denotes the standard Hilbert space of square integrable functions on $\partial \Omega$ with inner product 
      \begin{equation*}
            \inner{\phi_1}{\phi_2}_{L^2(\partial \Omega)} = \int_{\partial \Omega} \phi_1(x) \phi_2(x)\,dS(x)\;.
      \end{equation*}
      $L^2(\Sigma)$ denotes the standard Hilbert space of square integrable functions on $\Sigma$ with inner product 
      \begin{equation*}
            \inner{\phi_1}{\phi_2}_{L^2(\Sigma)} = \int_0^T \int_{\partial \Omega} \phi_1(x,t) \phi_2(x,t)\,dS(x) dt\;.
      \end{equation*}
\item The induced norms are denoted by $\norm{\cdot}_{L^2(\Omega)}$, $\norm{\cdot}_{L^2(\Sigma)}$, $\norm{\cdot}_{H^1(\hat{\Omega})}$ and 
      $\norm{\cdot}_{H_0^1(\hat{\Omega})}$, respectively.
      
      For $\Omega$ bounded, on $H_0^1(\Omega)$, the norms $\norm{\cdot}_{H_0^1(\Omega)}$ and $\norm{\cdot}_{H^1(\Omega)}$ are equivalent (see \cite[Theorem 6.28]{Ada75}):
      \begin{equation}
      \label{eq:equiv}
      C_0 \norm{\rho}_{H^1(\Omega)} \leq \norm{\rho}_{H_0^1(\Omega)} \leq \norm{\rho}_{H^1(\Omega)} \quad \text{ for all } \rho \in H_0^1(\Omega).
      \end{equation}
\item The trace operator 
      $\gamma_{\Omega} : H^1(\RR^n) \to L^2(\partial \Omega)$ restricts functions defined on $\RR^n$ 
      onto $\partial \Omega$, respectively. 
      This operator is the decomposition of the standard trace operator 
      \begin{equation*}
       \gamma: H^1(\Omega) \to L^2(\partial \Omega)
      \end{equation*}
      and the restriction operator 
      \begin{equation*}
       R: H^1(\RR^n) \to H^1(\Omega)\,,
      \end{equation*}
      and thus as a composition of two bounded operators \cite[Theorem 5.22]{Ada75} bounded. 
      We abbreviate the norm with 
      \begin{equation}
       \label{eq:norm}
       \mathcal{C}_\gamma:=\norm{\gamma \circ R}\;.
      \end{equation}
\end{itemize}
\end{notation}

\begin{notation}
The absorption density $f$ and the sound speed $c^2$ are supposed to satisfy: 
\begin{itemize}
 \item $c \in C^1(\overline{\Omega})$, satisfies $0 < c_{min} \leq c(x) \leq c_{max}$ and $c$ is non-constant in $\Omega$. 
 \item Without loss of generality we assume that $c\equiv 1$ in $\Omega^+$. 
 \item The absorption density function $f \in H_0^1(\Omega)$ is compactly supported in $\Omega$: \\
 $\text{supp}(f) \subseteq \Omega$.
\blue{ \item We denote by
 $|\Omega|:=\int_\Omega 1 dx$ the area of $\Omega$.} 
\end{itemize}
For the sake of simplicity of notation we omit space and time arguments of functions whenever this is convenient.
\end{notation}

\section{Direct Problem of Wave-Propagation}
\label{sec:direct}
We analyze the \emph{wave operator} $L$ mapping the absorption density $f$ onto the solution 
$y$ of the wave equation \eqref{eq:wave_intro} restricted to $\Sigma$. That is
\begin{equation}\label{obs_op}
L: H_0^1(\Omega)  \rightarrow L^2(\Sigma)\,, \quad  f \mapsto y|_\Sigma\;.
\end{equation}
In the following we show that $L$ is bounded. Let us write
\begin{align}\label{eq:wave_energy}
E(t) := \int_{\RR^n} \frac1{c^2}\,(y')^2 + \abs{ \nabla y}^2\,dx\;.
\end{align}
Computing the derivative of $E$ with respect to $t$ and taking into account \eqref{eq:wave_intro} gives 
\begin{equation*}
E'(t) = 2 \int_{\RR^n}\frac1{c^2}\, y'' y' - \Delta y\, y'\,dx = 0\;.
\end{equation*}
Consequently
\begin{equation}
\label{eq:E}
E(t) = E(0) = \norm{f}_{H_0^1(\Omega)}^2\,,
\end{equation}
which implies that 
\begin{align}\label{eq:wave_bound1}
\frac1{\blue{c^2_\text{max}}}\int_{\RR^n} (y')^2\,dx \leq \int_{\RR^n} \frac1{c^2}\,(y')^2\,dx \leq\blue{ E(t) =} \norm{f}_{H_0^1(\Omega)}^2
\intertext{and}\label{eq:wave_bound2}
\norm{y(t)}_{H_0^1(\RR^n)}  \blue{= \int_{\RR^n}|\nabla y|^2 dx\leq E(t)}\leq \norm{f}_{H_0^1(\Omega)}
\end{align}
for every $t\in(0,T)$.
\begin{lemma}
 Let $y$ be the solution of \eqref{eq:wave_intro}, then 
 \begin{equation}
 \label{eq:fund_const}
  \norm{y(t)}_{H^1(\RR^n)} \leq \mathcal{C}(T)\norm{f}_{H_0^1(\Omega)}\,,\qquad 
  \text{ for all } t \in (0,T)\;.
 \end{equation}
 with 
 \begin{equation*}
  \mathcal{C}(T):= \sqrt{\blue{1+\frac2{C_0^2}\max\set{1,\blue{c_\mathrm{max}^2} T^2}}}\,,
 \end{equation*}
 where $C_0$ is defined in \eqref{eq:equiv}.
\end{lemma}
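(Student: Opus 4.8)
The plan is to split
\[
\norm{y(t)}_{H^1(\RR^n)}^2 = \norm{y(t)}_{L^2(\RR^n)}^2 + \int_{\RR^n}\abs{\nabla y(t)}^2\,dx\,,
\]
and to observe that the gradient part is already under control: by \eqref{eq:wave_bound2} it does not exceed $\norm{f}_{H_0^1(\Omega)}^2$. Hence everything reduces to an estimate for $\norm{y(t)}_{L^2(\RR^n)}$, which the conserved energy $E$ by itself does not supply, since $E$ only sees $y'$ and $\nabla y$, not $y$ itself.

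To bound the $L^2$-part I would integrate in time. Using $y'(\cdot,0)=0$ one writes $y(\cdot,t) = f + \int_0^t y'(\cdot,s)\,ds$; Minkowski's integral inequality together with \eqref{eq:wave_bound1} (which gives $\norm{y'(s)}_{L^2(\RR^n)}\le c_{\max}\norm{f}_{H_0^1(\Omega)}$ uniformly in $s$) then yields $\norm{y(t)}_{L^2(\RR^n)} \le \norm{f}_{L^2(\Omega)} + t\,c_{\max}\norm{f}_{H_0^1(\Omega)}$, where I use that $f$ is supported in $\Omega$, so its $L^2(\RR^n)$- and $L^2(\Omega)$-norms coincide. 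Squaring, using $(a+b)^2\le 2a^2+2b^2$ and $t\le T$,
\[
\norm{y(t)}_{L^2(\RR^n)}^2 \le 2\norm{f}_{L^2(\Omega)}^2 + 2\,c_{\max}^2 T^2\,\norm{f}_{H_0^1(\Omega)}^2\,.
\]

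The final ingredient is a sharp use of \eqref{eq:equiv}. Since, by Notation \ref{not:spaces_trace}, the $H_0^1(\Omega)$-norm is exactly the gradient seminorm, one has $\norm{f}_{H^1(\Omega)}^2 = \norm{f}_{L^2(\Omega)}^2 + \norm{f}_{H_0^1(\Omega)}^2$, so \eqref{eq:equiv} gives $\norm{f}_{L^2(\Omega)}^2 \le (C_0^{-2}-1)\norm{f}_{H_0^1(\Omega)}^2$ (the bracket being nonnegative because $C_0\le 1$). Adding the gradient bound $\norm{f}_{H_0^1(\Omega)}^2$ to the displayed estimate and using $c_{\max}^2 T^2 \le \max\{1,c_{\max}^2T^2\} \le 1 + C_0^{-2}\bigl(\max\{1,c_{\max}^2T^2\}-1\bigr)$, all constants collapse to exactly $\mathcal C(T)^2 = 1 + \tfrac{2}{C_0^2}\max\{1,c_{\max}^2T^2\}$. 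The one genuinely delicate point is the justification of the identity $y(\cdot,t)=f+\int_0^t y'(\cdot,s)\,ds$ at the regularity furnished by the energy method (namely $y\in L^\infty(0,T;H^1)$ with $y'\in L^\infty(0,T;L^2)$); this is handled in the standard way, first for smooth data and then by density, using that $y\in C([0,T];L^2(\RR^n))$ with weak time derivative $y'$. Everything else is routine.
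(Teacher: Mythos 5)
Your proof is correct and follows essentially the same route as the paper: energy conservation controls $\nabla y$ and $y'$, writing $y(\cdot,t)=f+\int_0^t y'(\cdot,s)\,ds$ yields $\norm{y(t)}_{L^2(\RR^n)}^2\le 2\norm{f}_{L^2(\Omega)}^2+2c_{\max}^2T^2\norm{f}_{H_0^1(\Omega)}^2$, and \eqref{eq:equiv} converts the $L^2$-term into the $H_0^1(\Omega)$-norm. The only cosmetic differences are your use of Minkowski's integral inequality where the paper uses Cauchy--Schwarz in time, and the order in which \eqref{eq:equiv} is applied (you isolate $\norm{f}_{L^2(\Omega)}^2\le(C_0^{-2}-1)\norm{f}_{H_0^1(\Omega)}^2$ first, the paper groups into $\max\set{1,c_{\max}^2T^2}\norm{f}_{H^1(\Omega)}^2$ first); both correctly produce the stated constant $\mathcal{C}(T)$.
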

\begin{proof}
First, we note that for arbitrary $t \in (0,T)$, it follows from \eqref{eq:wave_bound1} that:
\begin{equation*}
 \begin{aligned}
  \int_{\RR^n} (y(x,t)-y(x,0))^2\,dx &= \int_{\RR^n} \left( \int_0^t y'(x,\hat{t})\,d\hat{t} \right)^2\,dx\\
  &\leq t \int_{\RR^n} \int_0^t (y'(x,\hat{t}))^2 \,d\hat{t} \,dx\\  
  &= t \int_0^t \int_{\RR^n} (y'(x,\hat{t}))^2 \,dx\,d\hat{t} \\
  &\leq \blue{c_\text{max}^2} t^2 \norm{f}_{H_0^1(\Omega)}^2 \\
  &\leq \blue{c_\text{max}^2} T^2 \norm{f}_{H_0^1(\Omega)}^2\;.
 \end{aligned}
\end{equation*}
Because $(a-b)^2 \geq \frac12 a^2 - b^2$ it follows from \eqref{eq:equiv} that 
\begin{equation*}
 \begin{aligned}
  \int_{\RR^n} (y(x,t))^2\,dx &\leq 2 \int_{\RR^n} (y(x,t)-y(x,0))^2\,dx + 2\int_{\RR^n} (y(x,0))^2\,dx\\
  &\leq 2 \blue{c_\text{max}^2} T^2 \norm{f}_{H_0^1(\Omega)}^2 + 2 \norm{f}_{L^2(\Omega)}^2\\
  &\leq 2 \max\set{1,\blue{c_\text{max}^2} T^2}\norm{f}_{H^1(\Omega)}^2\;.
 \end{aligned}
\end{equation*}
Because $f \in H_0^1(\Omega)$ it follows that 
\begin{equation*}
  \norm{y(t)}_{L^2(\RR^n)}^2 \leq \frac{2}{C_0\blue{^2}} \max\set{1,\blue{c_\text{max}^2}T^2}\norm{f}_{H_0^1(\Omega)}^2\;.
\end{equation*}
This, together with \eqref{eq:wave_bound2} shows that for all $t \in (0,T)$:
\begin{equation*}
\norm{y(t)}_{H^1(\RR^n)} \leq \sqrt{\blue{1+\frac2{C_0^2}\max\set{1,\blue{c_\text{max}^2} T^2}}}\norm{f}_{H_0^1(\Omega)}\;.
\end{equation*}
\end{proof}

In the following we prove boundedness of $L$:
\begin{theorem}
The operator $L : H_0^1(\Omega) \to L^2(\Sigma)$ is bounded and 
\begin{equation} \label{eq:norm_l}
\norm{L} \leq \mathcal{C}_\gamma \mathcal{C}(T)\sqrt{T}\;.
\end{equation}
\end{theorem}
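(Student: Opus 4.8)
The plan is to estimate $\norm{Lf}_{L^2(\Sigma)}$ directly by slicing in time and applying the trace bound pointwise in $t$, then invoking the Lemma. First I would write, using Fubini's theorem,
\begin{equation*}
\norm{Lf}_{L^2(\Sigma)}^2 = \int_0^T \int_{\partial \Omega} \abs{y(x,t)}^2\,dS(x)\,dt = \int_0^T \norm{\gamma_\Omega y(\cdot,t)}_{L^2(\partial \Omega)}^2\,dt\;,
\end{equation*}
which requires knowing that for (almost) every $t \in (0,T)$ the slice $y(\cdot,t)$ lies in $H^1(\RR^n)$ — this is exactly what \eqref{eq:fund_const} guarantees — so that the trace $\gamma_\Omega y(\cdot,t)$ is well defined and agrees with the restriction of $y$ to $\partial\Omega\times\{t\}$.

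Next I would bound the integrand. Since $\gamma_\Omega = \gamma \circ R$ is the composition whose operator norm is denoted $\mathcal{C}_\gamma$ in \eqref{eq:norm}, one has for each such $t$
\begin{equation*}
\norm{\gamma_\Omega y(\cdot,t)}_{L^2(\partial \Omega)} \leq \mathcal{C}_\gamma \norm{y(\cdot,t)}_{H^1(\RR^n)} \leq \mathcal{C}_\gamma\, \mathcal{C}(T)\, \norm{f}_{H_0^1(\Omega)}\;,
\end{equation*}
where the second inequality is \eqref{eq:fund_const}. Crucially the right-hand side is independent of $t$.

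Finally I would integrate this uniform bound over $(0,T)$:
\begin{equation*}
\norm{Lf}_{L^2(\Sigma)}^2 \leq \int_0^T \mathcal{C}_\gamma^2\, \mathcal{C}(T)^2\, \norm{f}_{H_0^1(\Omega)}^2\,dt = T\, \mathcal{C}_\gamma^2\, \mathcal{C}(T)^2\, \norm{f}_{H_0^1(\Omega)}^2\;,
\end{equation*}
and take square roots to obtain $\norm{Lf}_{L^2(\Sigma)} \leq \mathcal{C}_\gamma\, \mathcal{C}(T)\, \sqrt{T}\, \norm{f}_{H_0^1(\Omega)}$, whence $L$ is bounded with the claimed estimate on $\norm{L}$.

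The only genuinely delicate point is the measurability and integrability in $t$ needed to justify the Fubini step and to make sense of $\norm{\gamma_\Omega y(\cdot,t)}_{L^2(\partial\Omega)}$ for a.e.\ $t$; this rests on the regularity of the wave solution $y$ (continuity of $t \mapsto y(\cdot,t)$ into $H^1(\RR^n)$, which the energy identity \eqref{eq:E} and the Lemma underpin). Once that is granted, the rest is the routine chain of inequalities above, with no sharpness issues since each constant is simply passed through.
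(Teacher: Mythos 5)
Your proposal is correct and follows essentially the same route as the paper: slice in time, apply the trace bound $\norm{\gamma_\Omega y(\cdot,t)}_{L^2(\partial\Omega)} \leq \mathcal{C}_\gamma \norm{y(\cdot,t)}_{H^1(\RR^n)}$ pointwise using \eqref{eq:norm}, invoke the uniform estimate \eqref{eq:fund_const} from the Lemma, and integrate over $(0,T)$. Your added remarks on measurability of the slices are a welcome refinement, but the argument is the same chain of inequalities as in the paper's proof.
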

\begin{proof}
For given $f$ let $y$ be the solution of \eqref{eq:wave_intro}). 
From \eqref{eq:wave_energy} it follows that the solution $y$ of \eqref{eq:fund_const} is in $H^1(\RR^n)$ for every $t > 0$.
Thus from \eqref{eq:norm} and \eqref{eq:fund_const} it follows that 
\begin{equation*}
\norm{y}_{\Sigma}^2 = \int_0^T \int_{\partial \Omega} y^2(t)\,d\sigma d t
\leq \mathcal{C}_\gamma^2 \int_0^T \norm{y(t)}_{H^1(\RR^n)}^2 d t \leq \mathcal{C}_\gamma^2 \mathcal{C}(T)^2 T \norm{f}_{H_0^1(\Omega)}^2\,,
\end{equation*}
which gives the assertion.

\begin{remark}[Injectivity of $L$]\label{rem:inj}
In order to obtain injectivity of $L$, we need $T$ to be sufficiently large. To specify this, we define
\begin{align}\label{eq:T0}
T_0:=\max\limits_{x\in\Omega}\left(\dist(x,\d\Omega)\right)\,,
\end{align}
where $\dist(x,\d\Omega)$ is the distance of $x$ to the closest point $x'\in\partial \Omega$
with respect to the Riemannian metric $c^{-2}dx$ (see also \cite{QiaSteUhlZha11}). 
From \cite[Thm. 2]{SteUhl09} it follows that if $T>T_0$, than $L[f]=0$ implies $f=0$ in $(0,T)\times\mathbb R^n$. 
\end{remark}
\end{proof}

In the following we characterize the adjoint of $L: H_0^1(\Omega) \to L^2(\Sigma)$ on a dense subset of $L^2(\Sigma)$.
Because we know from elementary functional analysis that $L^*: L^2(\Sigma) \to H_0^1(\Omega)$ is bounded, \blue{we get a 
characterization \blue{of the whole space} by limits of convergent sequences.}
\begin{definition}
Let $i$ be the embedding operator from $H_0^1(\Omega)$ to $L^2(\Omega)$. 
Then $i^*: L^2(\Omega) \to H_0^1(\Omega)$ is the operator 
which maps a function $\psi \in L^2(\Omega)$ onto the solution of the equation
\begin{equation*}
{-\Delta u} = \psi\text{ in } \Omega\,, \qquad u=0 \text{ on } \partial \Omega\;.
\end{equation*}
That is 
\begin{equation}\label{eq:Dirichlet}
 i^* = {- \Delta^{-1}}\,,
\end{equation}
where $\Delta$ is the Laplace-operator with homogeneous Dirichlet boundary conditions.
\end{definition}

In the following we derive the adjoint $L^*: $ of the operator $L$, which is required for 
the implementation of the Landweber iteration below.
\begin{theorem}
For $h \in C^\infty((0,T) \times \partial \Omega)$ the adjoint of the operator $L$, defined in \eqref{obs_op}, is given by
\begin{equation}\label{eq:adjoint0}
L^*[h] = i^* \circ L_D^*[h]
\end{equation}
where  
\begin{equation}
\label{eq:adjoint}
L_D^*[h] = \left. \frac1{c^2}\,z'(0)\right|_\Omega\,,
\end{equation}
and $z:=z(h)$ is the weak solution of
 \begin{equation}\label{eq:wave_adj}
 \begin{aligned}
         \frac1{c^2}\, z'' - \Delta z &=0 \text{ in } \RR^n \backslash \partial \Omega \times (0,T)\,,\\
          z(T) = z'(T) &=0  \text{ in } \RR^n, \\
          \left[ z \right] =0\,, \quad
        \left[  \frac{\partial z}{\partial \bm n} \right] &= h \text{ on } \partial \Omega \times (0,T)\;.
\end{aligned}
\end{equation}
Here 
\[ [z]:=z^+|_\Sigma-z^-|_\Sigma \text{ and }
   \left[   \frac{\partial z}{\partial \bm n}\right] := 
   \left.\frac{\partial z^+}{\partial \bm n}\right|_\Sigma - \left.\frac{\partial z^-}{\partial \bm n} \right|_{\Sigma}
   \]
where $z^+:=z|_{\Omega^+\times(0,T)}$ and $z^-:=z|_{\Omega\times(0,T)}$.
\end{theorem}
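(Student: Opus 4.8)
The plan is to establish the defining relation of the adjoint,
$\inner{Lf}{h}_{L^2(\Sigma)} = \inner{f}{i^* \circ L_D^*[h]}_{H_0^1(\Omega)}$,
for every $f \in H_0^1(\Omega)$ and every $h \in C^\infty((0,T)\times\partial\Omega)$, by a Lagrange-identity computation coupling the forward field $y = y(f)$ solving \eqref{eq:wave_intro} with the backward field $z = z(h)$ solving \eqref{eq:wave_adj}. Since $L^*$ is bounded and $C^\infty((0,T)\times\partial\Omega)$ is dense in $L^2(\Sigma)$, it suffices to identify $L^*$ on this dense subset; note in particular that it is the composition $i^* \circ L_D^*$ that is bounded on $L^2(\Sigma)$, not $L_D^*$ by itself, which is why the characterization is phrased for smooth $h$ and extended by continuity.

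First I would multiply $\frac1{c^2}y'' - \Delta y = 0$ by $z$, multiply $\frac1{c^2}z'' - \Delta z = 0$ by $y$, subtract, and integrate over $(0,T)\times\RR^n$, splitting the spatial integral into the contributions over $\Omega^- = \Omega$ and $\Omega^+ = \RR^n\setminus\overline\Omega$ so that $\Delta z$ is taken in the classical (piecewise) sense on each side. By finite speed of propagation, $y(\cdot,t)$ and $z(\cdot,t)$ are compactly supported for $t\in[0,T]$, so no boundary term at spatial infinity arises.

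For the temporal factor, integrating by parts twice in $t$ (with $c$ time-independent) yields $\int_{\RR^n}\frac1{c^2}\bigl[y'z - z'y\bigr]_{t=0}^{t=T}\,dx$; the terminal conditions $z(T)=z'(T)=0$ kill the $t=T$ contribution, while $y'(0)=0$ and $y(0)=f$ leave $\int_{\RR^n}\frac1{c^2}z'(0)\,f\,dx = \int_\Omega \frac1{c^2}z'(0)\,f\,dx$, using $\supp f\subseteq\Omega$. For the spatial factor, Green's second identity on $\Omega^-$ (outward normal $\bm n$) and on $\Omega^+$ (outward normal $-\bm n$), together with the continuity of the normal trace of $y$ across $\partial\Omega$ (no single layer, since $y$ solves the equation globally), produces $\int_{\partial\Omega}\bigl(-\tfrac{\partial y}{\partial\bm n}\,[z] + y\,[\tfrac{\partial z}{\partial\bm n}]\bigr)\,dS$, which by the transmission conditions $[z]=0$ and $[\partial z/\partial\bm n]=h$ collapses to $\int_{\partial\Omega} y\,h\,dS$. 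Collecting both pieces,
\[
\inner{Lf}{h}_{L^2(\Sigma)} = \int_0^T\!\!\int_{\partial\Omega} y\,h\,dS\,dt = \int_\Omega \frac1{c^2}z'(0)\,f\,dx = \inner{f}{L_D^*[h]}_{L^2(\Omega)} .
\]
Finally, since $i$ is the embedding $H_0^1(\Omega)\hookrightarrow L^2(\Omega)$, one has $\inner{f}{\psi}_{L^2(\Omega)} = \inner{f}{i^*\psi}_{H_0^1(\Omega)}$ for all $\psi\in L^2(\Omega)$; applying this with $\psi = L_D^*[h]$ and using that $f\in H_0^1(\Omega)$ was arbitrary gives $L^*[h] = i^*\circ L_D^*[h]$.

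The main obstacle is making these formal manipulations rigorous at the regularity actually available: one needs existence, uniqueness and enough smoothness of the weak solution $z$ of the backward transmission problem \eqref{eq:wave_adj} so that the one-sided traces $z^\pm$, $\partial z^\pm/\partial\bm n$ and the time trace $z'(0)|_\Omega$ are well defined in the relevant $L^2$ spaces and the two integrations by parts and all boundary integrals are legitimate — this is precisely where the hypothesis $h\in C^\infty((0,T)\times\partial\Omega)$ enters. A secondary, bookkeeping point is to read the computation first as an identity between bounded linear functionals of $f$, and only then invoke density of $C^\infty((0,T)\times\partial\Omega)$ in $L^2(\Sigma)$ together with boundedness of $L^*$ to conclude that $i^*\circ L_D^*$ extends to, and equals, $L^*$ on all of $L^2(\Sigma)$.
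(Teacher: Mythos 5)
Your proposal is correct and follows essentially the same route as the paper: pairing the forward solution $y$ with the adjoint field $z$, using the terminal/initial conditions and the transmission conditions to obtain $\int_\Sigma h\,y\,dS\,dt = \int_\Omega c^{-2}z'(0)\,f\,dx$, then converting the $L^2(\Omega)$ pairing into the $H_0^1(\Omega)$ pairing via $i^* = -\Delta^{-1}$ and extending by density. The only real difference is presentational: the paper gets the key identity by inserting $v=y$ directly into the weak formulation \eqref{eq:weak_adjoint}, where the jump condition already appears as the boundary source term, which sidesteps the regularity issue you correctly flag about justifying one-sided normal traces of $z$ in the classical Green identities on $\Omega^{\pm}$.
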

\begin{proof}
For $h \in C^\infty((0,T) \times \partial \Omega)$ the existence of a weak solution of \eqref{eq:weak_adjoint} is proven in the Appendix.
Taking $v=y$ where $y$ denotes the solution of \eqref{eq:wave_intro} it follows that
\begin{equation}
\label{eq:adjoint_proof}
\begin{aligned} 
\int_\Sigma h L[f]\,dS(x) dt &= \int_\Sigma h y\,dS(x) dt \\
&= \int_\Omega \frac{z'(0)}{c^2} f\,dx\\
 &= \int_\Omega \Delta \left[\Delta^{-1} \left[\frac{z'(0)}{c^2}\right]\right] f\,dx\\
 &= - \int_\Omega \nabla \left(\Delta^{-1} \left[\frac{z'(0)}{c^2}\right]\right) \cdot \nabla  f\,dx\\
 &= \int_\Omega \nabla i^* \left[\frac{z'(0)}{c^2}\right] \cdot \nabla  f\,dx\\
 &= \inner{i^* \left[\frac{z'(0)}{c^2}\right]}{f}_{H_0^1(\Omega)}
 \;.
\end{aligned}
\end{equation}
\end{proof}

\blue{Existence of a weak solution of the equation \eqref{eq:wave_adj} follows from the result in the Appendix, which 
is proven along the lines of \cite{Eva98}. This kind of weak solution, used here, requires in fact differentiable transmission data $h$.
There might exist weaker solution concepts, which directly guarantee existence of a solution if $h \in L^2$, but the currently used 
result is not applicable in this sense, and therefore we have to use a density argument to give a meaning to the solution of \eqref{eq:wave_adj} in the case that the transmission data is only in $L^2$.}

%
\section{Landweber Iteration for Solving the Inverse 
	Problem of Photoacoustics}\label{sec:Landweber}
The \emph{photoacoustic imaging problem} rewrites as the solution of the operator equation:
\begin{equation}
\label{eq:operator}
 L[f] = m\;.
\end{equation}
If the null-space of $L$ is non-trivial, then iterative regularization algorithms, in general, when $m$ is an element of the range of $L$ 
reconstruct the \emph{minimum norm solution}
\begin{equation}
\label{eq:mp}
 f^\dagger = L^\dagger[m]\,,
\end{equation}
where $L^\dagger$ denotes the Moore-Penrose inverse of $L$ (see \cite{Nas76} for a survey on Moore-Penrose inverses).

We propose to use the Landweber's iteration for solving \eqref{eq:operator}, because it can be compared with 
time reversal methods, which are the standard references in this field. More efficient regularization 
algorithms are at hand \cite{Han95}, but these are less intuitive to be compared with time reversal.
\blue{For photoacoustic reconstruction, a conjugate gradient approach 
incorporating a quadratic approximation of the sound speed was proposed in \cite{ModAnaRiv10}.}
 
\blue{In numerical applications and with constant speed of sound, 
variational methods, like TV (total variation) minimization, have been implemented \cite{ZhaWanZha12,DonGoeKun14}. 
Such an approach can lead to sharper reconstructions of $f$ when it is piecewise constant.   
However, so far, a profound theoretical analysis exists only for the spherical mean operator \cite[Proposition 3.82]{SchGraGroHalLen09} 
and is yet missing for the photoacoustic operator \eqref{obs_op}.}

In the following we review properties of the Landweber iteration in an abstract setting (see \cite{Gro84,EngHanNeu96}). 
We use the same notation for the abstract operator and the photoacoustic operator and measurement data $m$, $m^\delta$, respectively,
in order to have direct connection.

\subsection{Abstract Landweber Regularization}
\label{subsec:regularization}
Everything that is formulated below is based on the following assumption:
\begin{assumption}
Let $L: H_1 \to H_2$ be an operator between Hilbert spaces $H_1$ and $H_2$ satisfying $\omega \norm{L}^2 \leq 1$ for some 
$\omega >0$. 
Moreover, we assume that data $m^\delta$ of $m$ is available (typically considered as noisy data), which satisfy
\begin{equation}
 \label{eq:noise}
\norm{m-m^\delta}_{H_2} \leq \delta\;.
\end{equation}
\end{assumption}

Then the Landweber iteration reads as follows:
\begin{align}\label{eq:def_landweber}
\begin{aligned}
f_0:=0 \quad \text{and} \quad f_k^\delta=f_{k-1}^\delta-\omega L^*[L[f_{k-1}^\delta]-m^\delta]\,,\quad k=1,2,\dots\;.
\end{aligned}
\end{align}
In case $\delta=0$, that is, if $m^\delta = m$, then we write $f_k$ instead of $f_k^\delta$. 

Let $\tau > 1$ be some fixed constant. The Landweber iteration is only iterated for $k=1,2,\ldots$ as long as  
\begin{equation}
 \label{eq:stop}
 \norm{m^\delta - L[f_k^\delta]}_{H_2} > \tau\delta\;.
\end{equation}
The index, where \eqref{eq:stop} is violated for the first time is denoted by $k_*(\delta,m^\delta)$.

The following theorem shows that the Landweber iteration converges to the best-approximating solution:
\begin{theorem}\label{thm:Landweber1}
Let $m \in \mathcal{R}(L)$ (note that the range of $L$ equals the domain of $L^\dagger$).
\begin{itemize}
 \item Let $\delta = 0$, then the Landweber iterates $(f_k)$ (cf. \eqref{eq:def_landweber}) converge to 
       the $f^\dagger$, i.e., $\norm{f_k - f^\dagger}_{H^1} \to 0$.
       In addition, we have the series expansion:
       \begin{equation*}
       f^\dagger = \sum_{j=0}^\infty (I-\omega L^*L)^j [L^*[m]]\;.
       \end{equation*}
 \item For $\delta > 0$ and $m^\delta$ satisfying $\norm{m-m^\delta}_{H_2} \leq \delta$ let $k_*^\delta = k_*(\delta,m^\delta)\blue{-1}$ 
       as in \eqref{eq:stop}.
       Then 
       \begin{equation*}
        f_{k_*^\delta}^\delta \to_{H_1} f^\dagger\;.
       \end{equation*}
\end{itemize}
Moreover, if $m\notin \mathcal D(L^\dagger)$, then $\norm{f_k}_{H_1} \to \infty$ as $k \to \infty$. 
\end{theorem}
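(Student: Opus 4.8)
The plan is to obtain the theorem as a special case of the general convergence theory for linear regularization by filter methods (see \cite{Gro84,EngHanNeu96}), applied to the operator $L:H_0^1(\Omega)\to L^2(\Sigma)$; this operator is bounded by the boundedness theorem above, and after rescaling we may assume $\omega\norm{L}^2\le 1$. The first step is to put the iteration \eqref{eq:def_landweber} into closed form. Unrolling the recursion $f_k^\delta=(I-\omega L^*L)f_{k-1}^\delta+\omega L^*[m^\delta]$ with $f_0^\delta=0$ gives $f_k^\delta=g_k(L^*L)\,L^*[m^\delta]$, where
\[
g_k(\lambda)=\omega\sum_{j=0}^{k-1}(1-\omega\lambda)^j=\frac{1-(1-\omega\lambda)^k}{\lambda},\qquad 1-\lambda g_k(\lambda)=(1-\omega\lambda)^k=:r_k(\lambda),
\]
and $g_k(L^*L)$, $r_k(L^*L)$ are read off via the spectral calculus of the bounded self-adjoint operator $L^*L$, whose spectrum lies in $[0,\norm{L}^2]\subseteq[0,\omega^{-1}]$. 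On the relevant range $\omega\lambda\in(0,1]$ the elementary estimates (i) $0\le r_k(\lambda)\le 1$ with $r_k(\lambda)\to0$ as $k\to\infty$, (ii) $0\le\lambda g_k(\lambda)=1-r_k(\lambda)\le1$, and (iii) $0\le g_k(\lambda)\le\omega k$ hold; (ii)--(iii) yield the operator norm bounds $\norm{L\,g_k(L^*L)L^*}\le1$ and, via $\lambda g_k(\lambda)^2=(\lambda g_k(\lambda))\,g_k(\lambda)\le\omega k$, also $\norm{g_k(L^*L)L^*}\le\sqrt{\omega k}$.

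For exact data ($\delta=0$, $m\in\mathcal R(L)$), write $f^\dagger=L^\dagger[m]$; it satisfies the normal equation $L^*[m]=L^*L\,f^\dagger$, so $f^\dagger-f_k=r_k(L^*L)\,f^\dagger$ and, in the spectral representation with spectral family $(E_\lambda)$ of $L^*L$,
\[
\norm{f^\dagger-f_k}_{H_0^1(\Omega)}^2=\int_{0^+}^{\norm{L}^2}r_k(\lambda)^2\,\rmd\norm{E_\lambda f^\dagger}^2\;\longrightarrow\;0\qquad(k\to\infty)
\]
by (i) and dominated convergence for the finite measure $\rmd\norm{E_\lambda f^\dagger}^2$; there is no contribution at $\lambda=0$ since $f^\dagger\in\mathcal N(L)^\perp=\mathcal N(L^*L)^\perp$. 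The Neumann-type series representation of $f^\dagger$ in the theorem is then obtained by expanding $g_k$ in the closed form of $f_k$ and passing to the limit $k\to\infty$, the convergence being exactly the statement just proved.

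For noisy data, the total error splits as $f^\dagger-f_k^\delta=r_k(L^*L)f^\dagger-g_k(L^*L)L^*[m^\delta-m]$, whence, by the exact-data estimate and (iii),
\[
\norm{f^\dagger-f_k^\delta}_{H_0^1(\Omega)}\le\norm{r_k(L^*L)f^\dagger}_{H_0^1(\Omega)}+\sqrt{\omega k}\,\delta .
\]
The discrepancy principle \eqref{eq:stop} is the standard a posteriori stopping rule for this filter: the residual equals $\norm{m^\delta-L f_k^\delta}=\norm{r_k(LL^*)m^\delta}$, which decreases in $k$ to $\norm{(I-Q)m^\delta}\le\delta<\tau\delta$ (with $Q$ the orthogonal projector onto $\overline{\mathcal R(L)}$ and $Qm=m$), so $k_*(\delta,m^\delta)$ is finite; moreover $k_*(\delta,m^\delta)\to\infty$ and, using the refined bound $\sqrt{k}\,\norm{m-L f_k}\to0$ (valid because $m\in\mathcal R(L)$ supplies a source condition of order $\tfrac12$), also $\delta^2 k_*(\delta,m^\delta)\to0$ as $\delta\to0$. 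Plugging $k=k_*(\delta,m^\delta)$ into the displayed error bound and letting $\delta\to0$ gives $f_{k_*^\delta}^\delta\to f^\dagger$ in $H_0^1(\Omega)$. This is precisely the classical result on Landweber iteration with the discrepancy principle, so here I would simply invoke \cite[Ch.~6]{EngHanNeu96}.

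For the divergence statement, suppose $m\notin\mathcal D(L^\dagger)$: then the Picard condition fails, which in spectral terms means $\int_{0^+}^{\norm{L}^2}\lambda^{-2}\,\rmd\norm{E_\lambda L^*[m]}^2=+\infty$. On $\omega\lambda\in(0,1]$ the numbers $g_k(\lambda)$ are nonnegative and increase monotonically to $\lambda^{-1}$ as $k\to\infty$, and $L^*[m]\in\mathcal N(L^*L)^\perp$ so there is no atom at $\lambda=0$; hence monotone convergence gives
\[
\norm{f_k}_{H_0^1(\Omega)}^2=\int_{0^+}^{\norm{L}^2}g_k(\lambda)^2\,\rmd\norm{E_\lambda L^*[m]}^2\;\nearrow\;\int_{0^+}^{\norm{L}^2}\lambda^{-2}\,\rmd\norm{E_\lambda L^*[m]}^2=+\infty ,
\]
which is the claim. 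I expect the only non-routine bookkeeping to concern the spectral measure near $\lambda=0$ --- namely that the exact-data error carries no mass there because $f^\dagger$ is the minimum-norm solution, and that ``$m\notin\mathcal D(L^\dagger)$'' translates into exactly the divergent integral above; everything else is the textbook filter-method argument for the geometric-series filter, its applicability to the photoacoustic $L$ being guaranteed by the boundedness already established.
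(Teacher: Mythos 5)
Your proposal is correct and follows exactly the route the paper itself relies on: the paper gives no proof of this theorem but simply cites the standard filter-method theory of \cite{Gro84,EngHanNeu96}, and your spectral-calculus argument (closed form $f_k=g_k(L^*L)L^*m^\delta$ with $g_k(\lambda)=(1-(1-\omega\lambda)^k)/\lambda$, dominated convergence for exact data, the error splitting plus discrepancy-principle stopping for noisy data, and monotone convergence for the divergence when $m\notin\mathcal D(L^\dagger)$) is precisely the argument those references contain. One small remark: your derivation correctly produces the series $f^\dagger=\omega\sum_{j=0}^\infty(I-\omega L^*L)^j[L^*[m]]$, which shows that the factor $\omega$ is missing in the series as printed in the theorem statement.
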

In the following we prove properties of the wave-operator $L$, such that we can apply the general regularization 
results.

\subsection{Convergence of the Landweber Iteration for the Photoacoustic Problem}

In the following we apply Theorem \ref{thm:Landweber1},  for the photoacoustic imaging problem.
\begin{corollary}
\label{cor:Landweber}
Let $\omega \leq \frac{1}{\mathcal{C}_\gamma^2 \mathcal{C}^2(T) T} = \mathcal{O}(1/T^3)$ and $L: H_0^1(\Omega) \to L^2(\Sigma)$ as in \eqref{obs_op}. 
Moreover, assume that 
\begin{equation*}
 \norm{m - m^\delta}_{L^2(\Sigma)} \leq \delta\;.
\end{equation*}
Then the Landweber iterates satisfy:
\begin{itemize}
 \item If $\delta =0$, then 
       \begin{equation*}
       f^\dagger = \sum_{k=0}^\infty (I-L^*L)^j [L^*[m]]\;.
       \end{equation*}
 \item For $\delta > 0$, the Landweber iteration is terminated at $k_*(\delta) := k_*(\delta,m^\delta)\blue{-1}$ according to \eqref{eq:stop}.
       Then 
       \begin{equation*}
        f_{k_*^\delta}^\delta \to f^\dagger \text{ for } \delta \to 0\;.
       \end{equation*}
 \item If $T>T_0$, the reconstruction is unique, and $f_{k_*(\delta)}^\delta$ converges to the unique solution.
\end{itemize}
\end{corollary}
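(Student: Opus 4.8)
The plan is to specialize the abstract regularization result Theorem~\ref{thm:Landweber1} to the photoacoustic setting, i.e.\ to $H_1 = H_0^1(\Omega)$, $H_2 = L^2(\Sigma)$ and $L$ as in \eqref{obs_op}; the only things that genuinely need checking are the admissibility of the relaxation parameter $\omega$ and the injectivity input needed for the last item.

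First I would verify the step-size condition $\omega\norm{L}^2 \le 1$. By the norm bound \eqref{eq:norm_l} we have $\norm{L}^2 \le \mathcal{C}_\gamma^2\,\mathcal{C}^2(T)\,T$, so the hypothesis $\omega \le \bigl(\mathcal{C}_\gamma^2\,\mathcal{C}^2(T)\,T\bigr)^{-1}$ of the corollary gives $\omega\norm{L}^2 \le 1$ at once; after absorbing $\omega$ into the scaling of $L$ (equivalently, replacing $L^*$ by $\omega L^*$), the iteration \eqref{eq:def_landweber} is exactly of the form covered by Theorem~\ref{thm:Landweber1}, and the Neumann series displayed in the statement is its $\delta=0$ expansion. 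For the asymptotic claim $\omega = \mathcal{O}(1/T^3)$ I would insert the explicit value $\mathcal{C}^2(T) = 1 + \tfrac{2}{C_0^2}\max\{1,c_{\mathrm{max}}^2T^2\}$ from the Lemma preceding \eqref{eq:norm_l}: as $T\to\infty$ this grows like $T^2$, hence $\mathcal{C}_\gamma^2\,\mathcal{C}^2(T)\,T$ grows like $T^3$ and the largest admissible $\omega$ behaves like $T^{-3}$.

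With these hypotheses in place (together with $m\in\mathcal R(L)$, as in Theorem~\ref{thm:Landweber1}), the first two bullets follow directly from that theorem: for exact data the iterates converge in $H_0^1(\Omega)$ to $f^\dagger=L^\dagger[m]$ with the stated series representation, and for noisy data $m^\delta$ satisfying $\norm{m-m^\delta}_{L^2(\Sigma)}\le\delta$ the iterates stopped according to the discrepancy principle \eqref{eq:stop} at $k_*(\delta)$ satisfy $f_{k_*^\delta}^\delta\to f^\dagger$ as $\delta\to0$. For the third bullet I would invoke Remark~\ref{rem:inj}: when $T>T_0$, with $T_0$ as in \eqref{eq:T0}, one has $L[f]=0\Rightarrow f=0$, i.e.\ $\mathcal N(L)=\{0\}$; hence $f^\dagger$ is the \emph{unique} solution of $L[f]=m$, and the convergence already obtained is convergence to that unique solution.

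I do not expect a real obstacle here — the statement is essentially a bookkeeping corollary of Section~\ref{subsec:regularization}. The only points demanding a little care are (i) matching the relaxation parameter $\omega$ of the abstract scheme with the unscaled Neumann series written in the statement, and (ii) ensuring that the injectivity input of Remark~\ref{rem:inj}, which rests on \cite[Thm.~2]{SteUhl09} and the geodesic quantity $T_0$ of \eqref{eq:T0}, is invoked under exactly the hypothesis $T>T_0$ used in the last bullet.
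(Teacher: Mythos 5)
Your proposal is correct and follows essentially the same route as the paper: verify $\omega\norm{L}^2\le 1$ from the norm bound \eqref{eq:norm_l}, invoke Theorem~\ref{thm:Landweber1} for the first two items, and use the injectivity from Remark~\ref{rem:inj} for $T>T_0$ to get uniqueness. Your additional remarks on the $\mathcal{O}(1/T^3)$ asymptotics and on reconciling the relaxation parameter $\omega$ with the unscaled Neumann series in the statement are details the paper's own (very terse) proof leaves implicit, but they do not constitute a different argument.
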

\begin{proof}
First, we note that from \eqref{eq:norm_l} it follows that $\omega \norm{L}^2 \leq 1$. 
Then, the first two items follow directly from Theorems \ref{thm:Landweber1}. 

From the injectivity of $L$ for $T>T_0$ (see Remark \ref{rem:inj}) it follows that $f^\dagger=f$, which implies 
unique reconstruction.
\end{proof}

\subsection{Comparison with Time Reversal}
We compare our approach with different variants of \emph{time reversal}. 
We formally define the time reversal operator:
\begin{align}\label{eq:TR_operator}
\bar{L}[h] = z(\cdot,0)\,,
\end{align} 
where $z$ is a solution of 
\begin{align}\label{eq:TR}
 \begin{aligned}
         \frac1{c^2} z''- \Delta z &=0 \text{ in } \Omega \times (0,T),\\
         z(T) = z'(T) &=0  \text{ in } \Omega , \\         
         z  &= h \text{ on } \partial \Omega \times (0,T),\\
\end{aligned}
\end{align}
The fundamental differences between $\bar{L}$ and $L^*$ are that $\bar{L}$ is defined for functions with support in $\Omega$ 
and that therefore $L^*$ requires a transmission condition in its definition.

Stefanov and Uhlmann \cite{SteUhl09} modified the time reversal approach in the following sense: 
Rather than assuming (in most cases unjustified) the initial data $z(T) \equiv 0$,
they used the harmonic extension of the data term $h(s,T)$, for $s \in \partial \Omega$, as initial datum at $T$.
That is, for 
\begin{equation*}
 - \Delta \phi = 0 \text{ in } \Omega\,, \text{ with } \phi(\cdot) = m(\cdot,T) \text{ on } \partial \Omega
\end{equation*}
the modified time-reversal operator 
\begin{align}\label{eq:TR_operator_Uhl}
\tilde{\bar L}[h] = z(\cdot,0)
\end{align} 
is defined by the solution of equation 
\begin{align}\label{eq:TR_Uhl}
 \begin{aligned}
         \frac1{c^2} z''- \Delta z &=0 \text{ in } \Omega \times (0,T),\\
          z(T) = \phi\,,\quad z'(T) &=0 \text{ in } \Omega , \\
        z  &= h \text{ on } \partial \Omega\times(0,T)\;.\\
\end{aligned}
\end{align}
They were able to show that under non-trapping conditions and for sufficiently large measurement time $T$, there exists a 
compact operator $K:H_0^1(\Omega)\to H_0^1(\Omega)$ satisfying $\norm{K}<1$, and
\begin{align}\label{eq:Neumann1}
\tilde{\bar{L}} L  = Id - K\,,
\end{align}
Therefore, the initial condition $f$ can be expanded into the Neumann series
\begin{align}\label{eq:Neumann2}
f=\sum_{j=0}^\infty K^j[m]\;.
\end{align}
By induction, it is easy to see that the $m$-th iterate can be written as
\begin{align}\label{eq:Neumann3}
f_k=f_{k-1}-\tilde{\bar{L}}[L[f_{k-1}] - m]\,,
\end{align}
where
\[
f_k = \sum_{j=0}^{\blue{k}} K^j [m]\;.
\]
At this point, we emphasize on the structural similarity between \eqref{eq:Neumann3} and \eqref{eq:def_landweber}.

\begin{remark}
 We emphasize that for time-reversal there is no theory on stopping in case of error-prone data, such as we have available for the Landweber iteration.
 \blue{In fact, the boundary data have to be in $H^1(\Sigma)$, which follows from $f\in H^1_0(\Omega)$ under the additional assumption that $\supp f\subset\Omega$ (see \cite[Remark 5]{SteUhl09}.}
\end{remark}
\blue{
\begin{remark}
Compared to time reversal, the smoothness assumptions to the data term can be relaxed when searching for a regularized solution. 
Indeed, instead of needing $H^1$-data in the range of $L$ as in time reversal, the present approach gives regularized convergence for a data-term in $L^2$.
The smoothness assumption on the coefficient $c$ is mainly of technical nature, to be able to apply the standard results of microlocal analysis.
A careful but tedious analysis might give similar results with less smoothness requirements. 
The stability for piecewise smooth coefficients has been analyzed in \cite{SteUhl11}.
\end{remark}
}

\section{Numerical realization of $L$ and $L^*$}\label{sec:fem_bem}
We solve \eqref{eq:wave_intro} and \eqref{eq:wave_adj} with the same numerical framework. 
By changing the variable $t\to T-t$ in \eqref{eq:wave_adj}, both equations can be rewritten as the 
transmission problem:
\begin{align}\label{eq:transmission}
\begin{aligned}
\frac{1}{c^2}v'' - \Delta v &= 0\qquad \text{in }\RR^n \backslash \partial \Omega\times(0,T)\,,\\
v(0)=v_0\,,\quad v'(0) &= 0\qquad \text{in }\RR^n\backslash \partial \Omega\,,\\
\left[\frac{\partial v}{\partial \bm n}\right] = \rho\,,\quad [v] &= 0
\qquad \text{on }\Sigma\;.
\end{aligned}
\end{align}
where for \eqref{eq:wave_intro} $\rho \equiv 0$ and $v_0 = f \in H_0^1(\Omega)$ and for \eqref{eq:wave_adj} we have 
$\rho \equiv h \in L^2(\Sigma)$ and $v_0 \equiv 0$\;. 

Let $v_0^- = v_0$ in $\Omega^-$, $v_0^+ = 0$ in $\Omega^+$, then $v^\pm = v|_{\Omega^\pm}$ satisfy, respectively:
\begin{equation}\label{eq:interior}
\begin{aligned}
\frac{1}{c^2} v_\pm'' - \Delta v_\pm &= 0\qquad \text{in } \Omega^\pm\times(0,T)\,,\\
v_\pm(0) =v_0^\pm\,,\quad  v_\pm'(0) &= 0\qquad \text{in } \Omega^\pm\,,\\
\end{aligned}
\end{equation}
together with the transmission conditions
\begin{equation}
\left[\frac{\partial v}{\partial \bm n}\right] = \rho \text{ and }
[v] = 0 \qquad \text{on }\Sigma\;.
\end{equation}

Let $G$ denote the fundamental solution of the standard wave equation with $c^2 \equiv 1$ in $\RR^n$. 
It is defined in $\RR^n\times \RR$, and its explicit expression
\begin{align*}
G(x,t) = 
\left\{\begin{array}{cl}
\frac{H(t-|x|)}{2\pi\sqrt{t^2-|x|^2}},\quad& n=2\,,\\
\frac{\delta(t-|x|)}{4\pi|x|},\quad& n=3\,,
\end{array}
\right.
\end{align*}
with $H$ denoting the Heaviside step function, and $\delta(\cdot)$ being the 3D Dirac delta distribution (see, e.g., \cite{Fri75}).

The \emph{(retarded)} \emph{single-} and \emph{double- layer potentials} for $(x,t)\in\Sigma$ are defined by
\begin{align}\label{eq:RP_operators}
\begin{aligned}
\mathcal V [\varphi](x,t)&:=\int_0^t\int_{\d\Omega} G(y-x,t-\tau)\varphi(y,\tau)dS_yd\tau\,,\\
\mathcal{K}[\psi](x,t)&:=\int_0^t\int_{\d\Omega} \frac{\d G(y-x,t-\tau)}{\d \bm n}\psi(y,\tau)dS_yd\tau\;.
\end{aligned}
\end{align}
Because we assume that $c=1$ outside of $\Omega$ and because we assume that $\partial \Omega$ is a $C^{1}$ boundary, 
it follows that $v^+$ satisfies (see \cite{Duo03}):
\begin{equation}\label{eq:boundary_relation}
\frac12 v^+(x,t) = -\mathcal V \left[\frac{\d v^+}{\d \bm n}\right](x,t) - \mathcal{K}\left[ v^+ \right](x,t)\,,\quad\text{ for all } (x,t)\in\Sigma\;.
\end{equation}
The transmission conditions imply that 
\begin{equation*}
v^+=v^- \text{ and } \frac{\partial v^+}{\d \bm n} = \frac{\partial v^-}{\d \bm n} + \rho \text{ on } \Sigma\;.
\end{equation*}
Therefore the equation for $v=v^-$ on $\Omega=\Omega^-$ can be rewritten as follows:
\begin{align}\label{eq:cont_system}
\begin{aligned}
\frac{1}{c^2} v'' - \Delta v &= 0\qquad \text{in }(\Omega)\times(0,T)\,,\\
v(0) =v_0\,,\quad v'(0) &= 0\qquad \text{in }\Omega\,,\\
\frac12 v + \mathcal V \left[\frac{\partial v}{\d \bm n} + \rho\right] \,+\mathcal{K}[v]&= 0 \qquad \text{on }\Sigma\;.\\
\end{aligned}
\end{align}
The numerical solution is based on a weak formulation of \eqref{eq:cont_system}.
Integrating over $\Omega$, multiplying by $w\in H^1(\Omega)$ and integration by parts of the first line of \eqref{eq:cont_system} gives
\[
\frac{d^2}{dt^2} \inner{\frac{1}{c^2}v(t)}{w}_{L^2(\Omega)} + \inner{\nabla v(t)}{\nabla w}_{L^2(\Omega)}-\inner{\frac{\d v}{\d\bm n}(t)}{w}_{L^2(\d\Omega)} = 0\;.
\]
Additionally we introduce the unknown function $\lambda:=\frac{\d v}{\d\bm n}$ defined on $\Sigma$.
We are therefore searching for a solution $(v,\lambda)\in H^1(\Omega)\times H^{-1/2}(\partial \Omega) $ that satisfies for almost all $t\in (0,T)$ the system
\begin{align}\label{eq:cont_system_weak}
\begin{aligned}
\frac{d^2}{dt^2} \inner{\frac{1}{c^2}v(t)}{w}_{L^2(\Omega)} + \inner{\nabla v(t)}{\nabla w}_{L^2(\Omega)}-\inner{\lambda}{w}_{L^2(\d\Omega)} &= 0\quad\quad\text{ for all } w\in H^1(\Omega)\,,\\
v(0) =v_0\,,v'(0) &= 0\qquad \text{in }\Omega\,,\\
\frac12 v(t) + \mathcal V [\lambda+\rho](t)\,+\mathcal{K}[v](t)&= 0 \qquad \text{on }\Sigma\;.
\end{aligned}
\end{align}
For detailed analysis of the discretization of equations of that type \eqref{eq:cont_system_weak} we refer 
to \cite{FalMon14}, since we closely follow their approach.

Next, we discuss the time discretization of the integrals 
$\mathcal V [\lambda]$ and $\mathcal{K}[v]$. 
We consider a uniform time discretization of the interval $(0,T)$ into $N$ steps of length $\Delta_t = T/N$, 
and defining the discrete time levels $t_n = n\Delta_t$. 
Following \cite{FalMonScu12}, by using Lubich's convolution quadrature formula \cite{Lub88}, 
we approximate $\mathcal V [\lambda]$ and $\mathcal{K}[v]$ at the time steps $t_n,\;n=0,\dots,N$ by
\begin{align}\label{eq:RP_semidiscrete}
\begin{aligned}
\mathcal V [\varphi](x,t_n)\approx\sum_{j=0}^n\int\limits_{\d\Omega} w_{n-j}^\mathcal{V}(\Delta_t,|x-y|) \varphi(y,\blue{t_j})dS_y\,,\\
\mathcal{K}[\psi](x,t_n)\approx\sum_{j=0}^n\int\limits_{\d\Omega} w_{n-j}^\mathcal{K}(\Delta_t,|x-y|)\psi(y,\blue{t_j})dS_y\;.
\end{aligned}
\end{align}
where the coefficients $w_n^\mathcal{V}$ and $w_n^\mathcal{K}$ satisfy
\begin{align}\label{eq:Lubich_coeff}
\begin{aligned}
w_n^\mathcal{V}(\Delta_t,|x-y|)&=\frac{\beta}{2\pi L}\sum_{l=0}^{L-1}K^\mathcal{V}\left(|x-y|,\frac{\gamma(\rho\exp(\mathrm i l2\pi/L))}{\Delta_t}\right)\exp(-\mathrm i n2\pi/L)\,,\\
w_n^\mathcal{K}(\Delta_t,|x-y|)&=\frac{\beta}{2\pi L}\sum_{l=0}^{L-1}K^\mathcal{K}\left(|x-y|,\frac{\gamma(\rho\exp(\mathrm i l2\pi/L))}{\Delta_t}\right)\exp(-\mathrm i n2\pi/L)\,,
\end{aligned}
\end{align}
where
\begin{equation*}
K^{\mathcal V}(r,s) = K_0(rs)\,,\qquad K^\mathcal{K}(r,s) =- sK_1(rs)\frac{\partial r}{\partial \bm n}\,,
\end{equation*}
and $K_0(\cdot),K_1(\cdot)$ are the second kind modified Bessel functions of \blue {order $0$ and order $1$}, respectively.
The function $\gamma$ is given by $\gamma(z)=3/2-2z+1/2 z^2$ and is the associated characteristic quotient 
of the used backward differentiation formula method of order two. 
For the involved constants we choose $L=2N$ and $\beta=\epsilon^{1/2N}$, 
where $\epsilon$ is the machine precision (see \cite{Lub88,FalMonScu12} for more details).

In the first equation in \eqref{eq:cont_system_weak}, the second time derivative is approximated using the second order central difference expression
\[
\frac{d^2}{dt^2}v^n = \frac1{\Delta t^2}\left( v^{n+1}-2v^n+v^{n-1} \right ) + \mathcal{O}(\Delta t^2)\,,
\] 
where $v^n:=v(.,t_n)$. The first time derivative occurring in the initial condition is also discretized by central differences, namely
\[
\frac{d}{dt}v^0 = \frac1{\blue{2}\Delta t}\left( v^{1}-v^{-1} \right ) + \mathcal{O}(\Delta t^2)\;.
\] 
 From that we can restate the initial conditions as
\begin{align}\label{eq:init_cond_time_discr}
\begin{aligned}
\inner{v^0}{w}&=\inner{v^0}{w}\,,\\ 
\inner{v^1}{w}&=\inner{\frac1{c^2}v^0}{w} - \frac12 \Delta t^2\left(\inner{\nabla v^0}{\nabla w} \blue{-} \inner{\lambda^0}{w}\right), \quad\text{ for all } w\in H^1(\Omega)\;.
\end{aligned}
\end{align} 
Using this, the explicit Euler discretization of \eqref{eq:cont_system_weak} is stated as
\begin{align}\label{eq:semidiscrete}
\begin{aligned}
~  \frac1{\Delta t^2}\inner{\frac{1}{c^2}(v^{n+1}-2v^n+v^{n-1})}{w} + \inner{\nabla v^n}{\nabla w} - \inner{\lambda^n}{w} &= 0,
 \quad\text{ for all } w\in H^1(\Omega)\,,\\
~  \frac12 v^{n+1}(t) + \blue{\mathcal{V}[\lambda+\rho](t_{n+1})\,+\mathcal{K}[v](t_{n+1})} &= 0 \qquad \text{on }\Sigma\,,\\
\end{aligned}
\end{align}
for all $1 \leq n \leq N$.
\blue{Note that in \eqref{eq:init_cond_time_discr} and \eqref{eq:semidiscrete} we write $\inner{\cdot}{\cdot}$ shorthand for the $L^2$ inner product over either $\Omega$ or $\partial \Omega$.}

In space $\Omega$ is triangulated, and we use piecewise quadratic basis functions, supplemented by one cubic function for $v$. The functions $\Lambda$ and $v|_{\d\Omega}$ are discretized by the use of piecewise linear functions.
With this ansatz a mass-lumped integration scheme can be robustly implemented, 
which is not the case for purely piecewise quadratic functions. 
The details and numerical analysis to this scheme can be found in \cite{CohJolRobTor01}.

\blue{
\begin{remark}[Advantages and limitations]
The proposed numerical technique has the potential to be used for very general types of measurement surfaces 
and does not need an artificial cut-off at the boundary  \cite{FalMon14}. 
However, it relies on the assumption that the initial source is in $H^1$. 
The method naturally introduces errors in presence of large gradients, as occurring at the edges of piecewise constant images. 
In such cases, a very fine mesh has to be chosen to avoid oscillations in the solution, at the cost of performance. 
In these cases, the use of non-conforming methods as discontinuous Galerkin might improve the result \cite{Coc03}.
\end{remark}}

\section{Numerical experiments and results}
\label{sec:results}

Here, we present numerical results for the Landweber reconstructions and compare it with 
standard time-reversal reconstructions and the Neumann-series-approach \eqref{eq:Neumann2} - 
always computed with the same discretization. 
We concentrate in particular on the cases where classical time reversal techniques have their 
theoretical and practical drawbacks, that is, so-called \emph{trapping speed geometries} and short measurement times. 
In these cases, neither time reversal nor the Neumann-series \eqref{eq:Neumann2} provide 
theoretical convergence. As we have shown in Corollary \ref{cor:Landweber}, the Landweber reconstruction 
converges to the least squares solution, regardless of the chosen measurement time. 
In what follows, we also show the practical applicability of our method in such cases.

To create the data sets, we use the discretization of the coupled method introduced in Section \ref{sec:fem_bem}. 
The forward computations are performed in a circle of radius 1. \blue{The simulation mesh size is $h=0.013$ for the  {ghosts} phantom and $h=0.009$ for the Shepp-Logan phantom. 
In the reconstructions we use $h_r=0.025$ for the ghosts and $h_r=0.0095$ for the Shepp-Logan.}
For both simulation and reconstructions, the chosen finite element space is that of continuous, piecewise quadratic functions in space. 
For the spatial discretization of the boundary terms, we take piecewise linear basis functions.
Note that for convenience and to optimize the computational effort, we assume $\partial \Omega$ to be a circle of Radius $R$.
For the time discretization, wo choose the step size $\Delta t=h/(15c_\text{max})$ for the simulation and $\Delta t_r=h_r/(14c_\text{max})$. 
Here $c_\text{max}$ is the maximum speed of sound.
The time reversal reconstructions are obtained by solving the initial boundary value problem \eqref{eq:TR} with homogeneous initial 
values. In the images, when we used the harmonic extension time-reversal \eqref{eq:TR_Uhl}, the heading is \emph{Neumann}. 
The Landweber reconstruction is performed by the scheme described in Corollary \ref{cor:Landweber}.
The choices of $c$ in both the trapping and non-trapping case have been taken 
from \cite{QiaSteUhlZha11}.

\subsection{Non-trapping sound speed}
\begin{figure}[!ht]\center
\includegraphics[width=4.5cm]{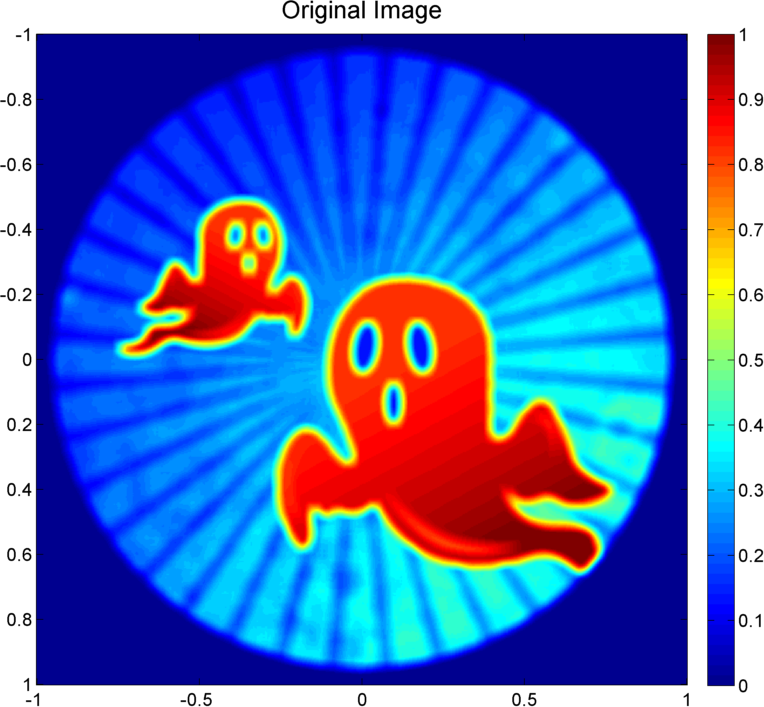}
\includegraphics[width=4.5cm]{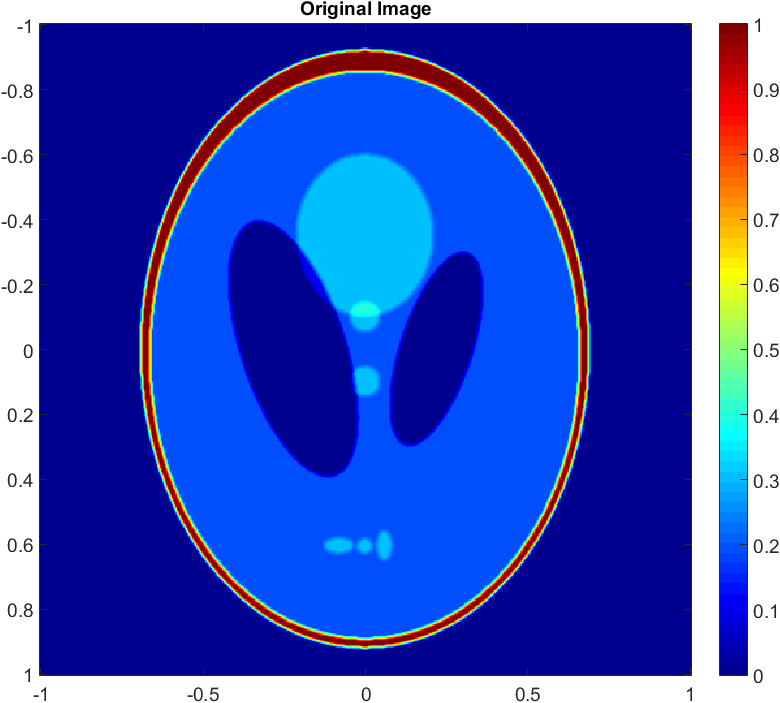}
\caption{\blue{From left to right: Ghost phantom; Shepp-Logan phantom.}}
\end{figure}
\begin{figure}[!ht]\center
\includegraphics[width=4.5cm]{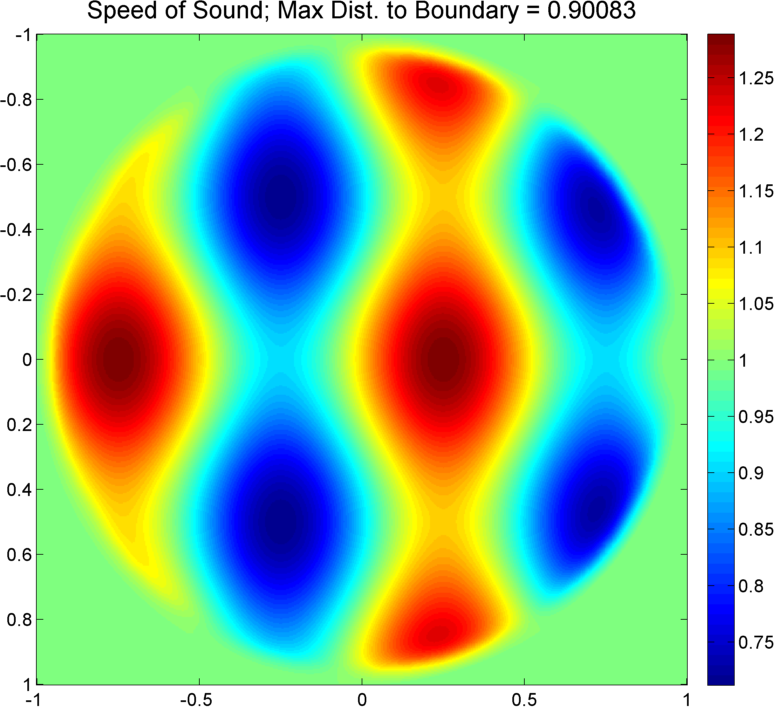}
\includegraphics[width=4.5cm]{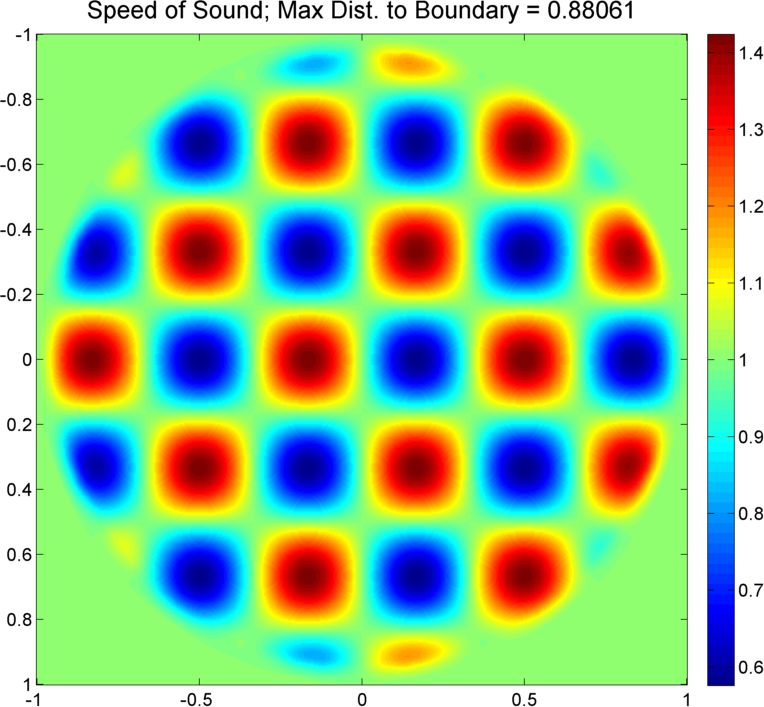}
\caption{\blue{From left to right: Non-trapping speed; Trapping speed.}}
\end{figure}

\begin{figure}[!ht]
\includegraphics[width=4.2cm]{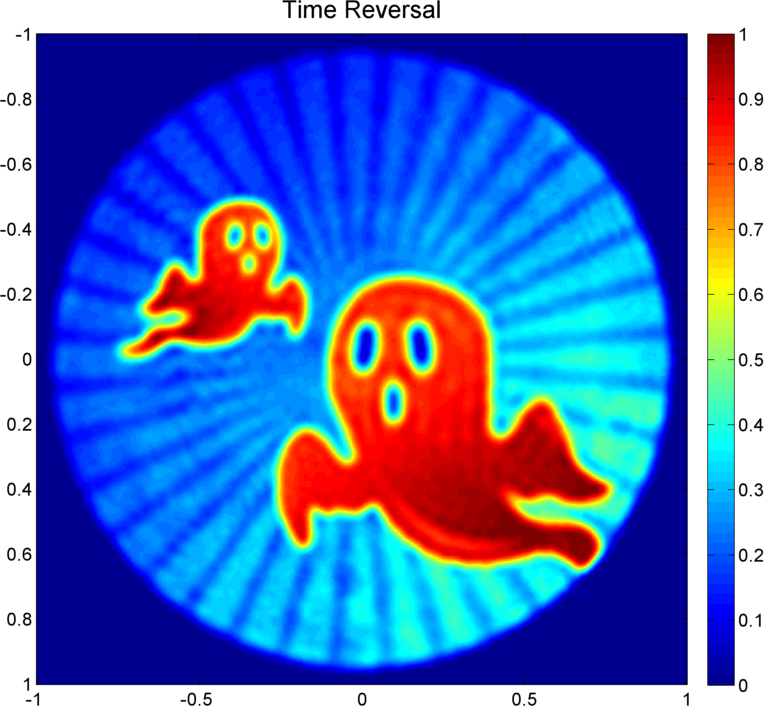}
\includegraphics[width=4.2cm]{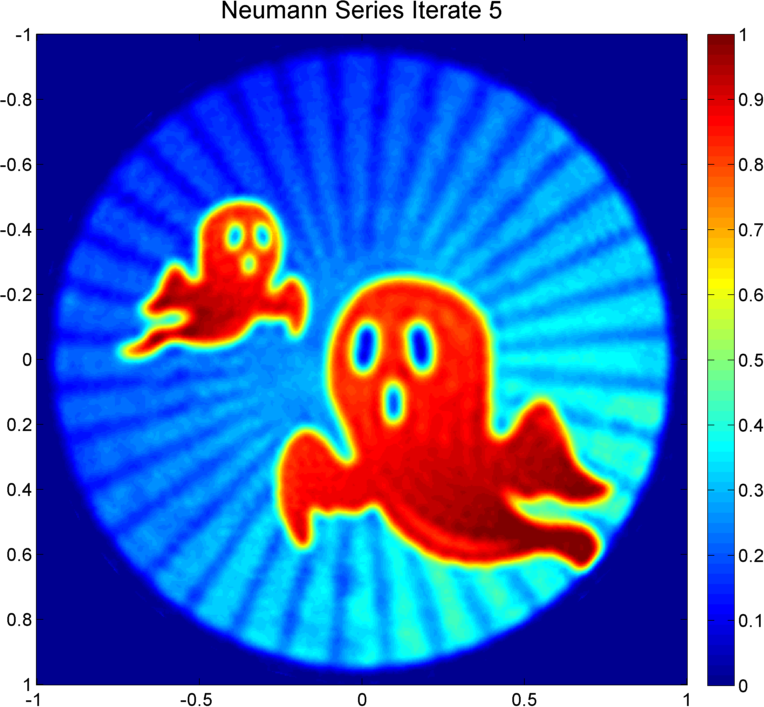}
\includegraphics[width=4.2cm]{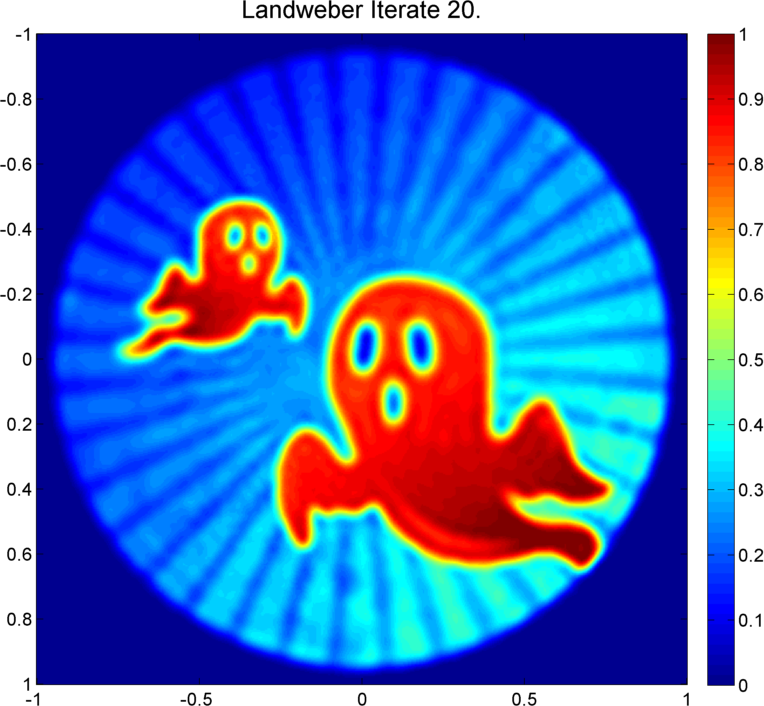}
\caption{Data: Non-trapping speed, $T=4T_0$. \blue{From left to right: Time reversal; 5th Neumann sum; Landweber iterate 20.}}
\label{fig:nontrap_long}
\end{figure}

\begin{figure}[!ht]
\includegraphics[width=4.2cm]{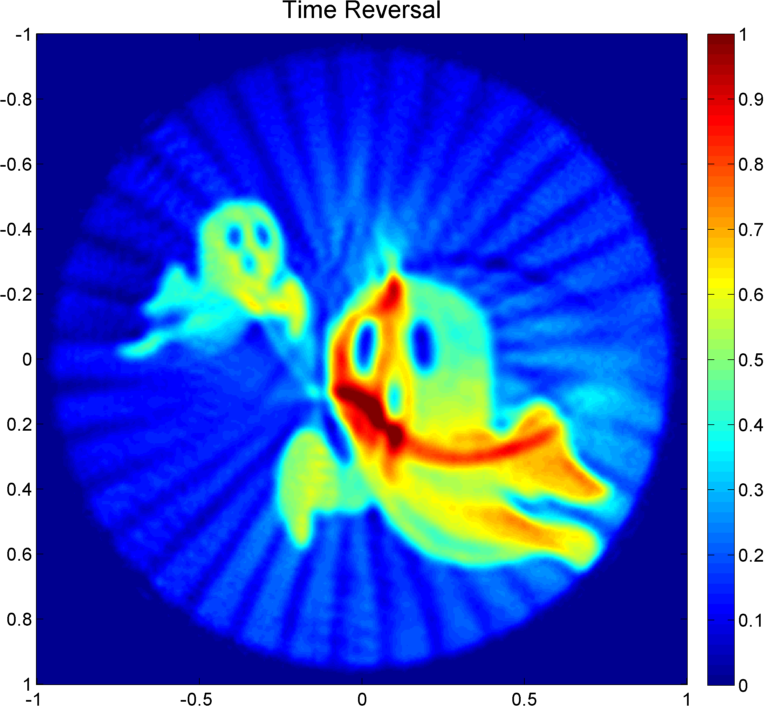}
\includegraphics[width=4.2cm]{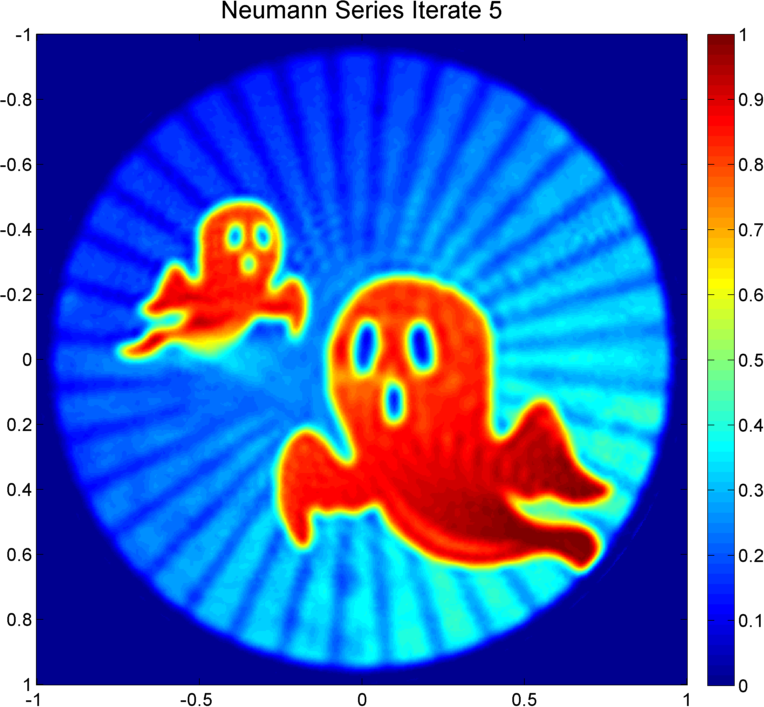}
\includegraphics[width=4.2cm]{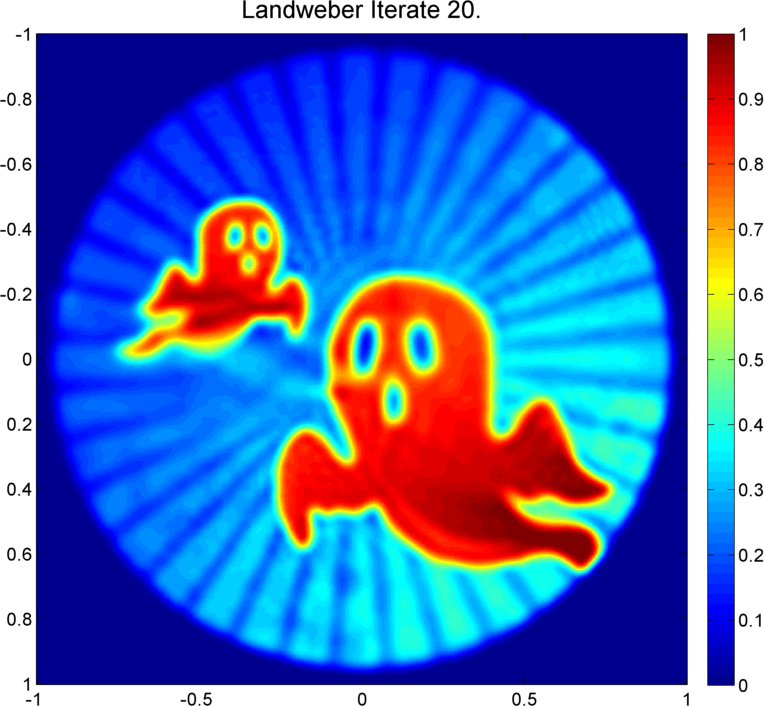}
\caption{Data: Non-trapping speed, $T=1.2T_0$. \blue{From left to right: Time reversal; 5th Neumann sum; Landweber iterate 20.}}
\label{fig:nontrap_short}
\end{figure}

\begin{figure}[!ht]
\includegraphics[width=4.2cm]{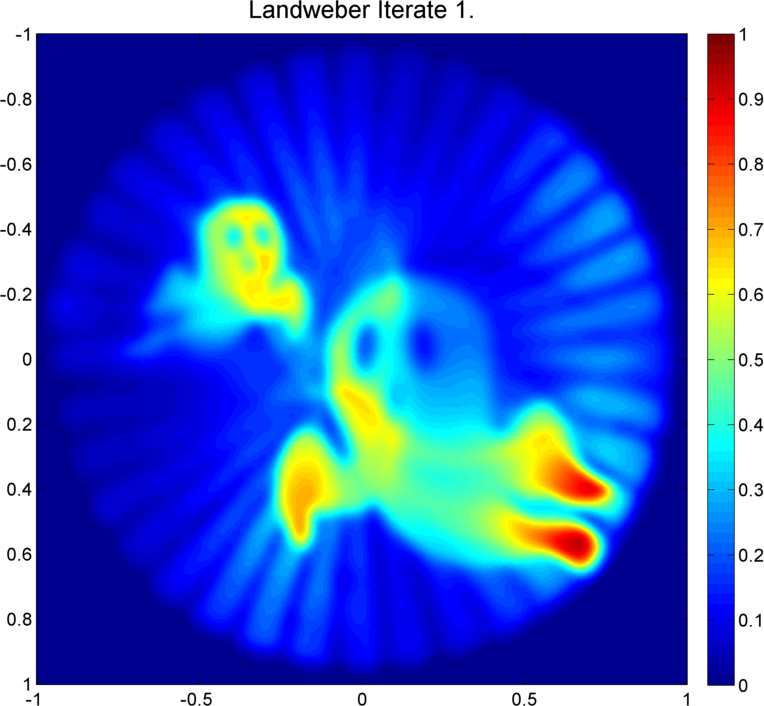}
\includegraphics[width=4.2cm]{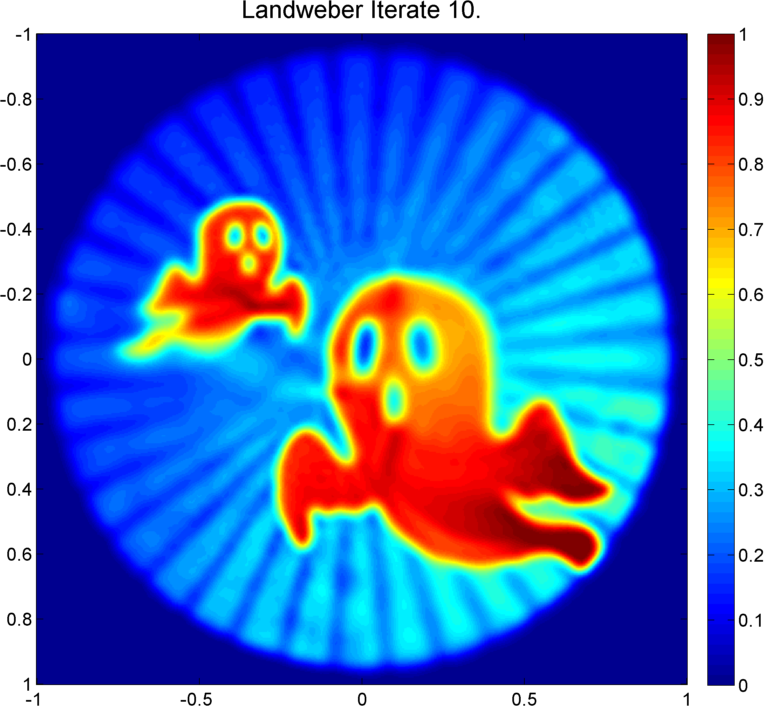}
\includegraphics[width=4.2cm]{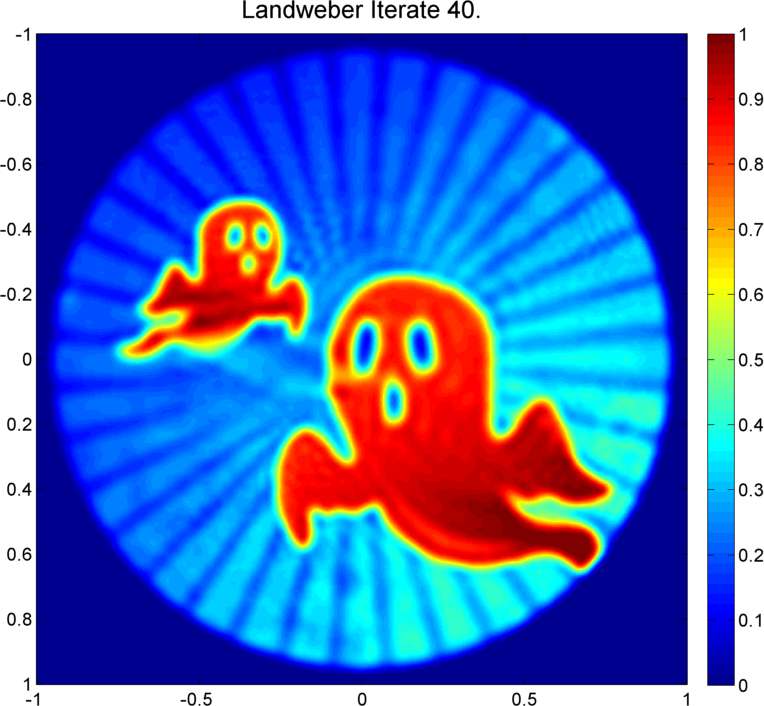}
\caption{Landweber reconstruction at different iterations. Data: Non-trapping speed, $T=1.2T_0$. \blue{From left to right: Iterate 1; Iterate 10; Iterate 50.}}
\label{fig:Landweber_iterations}
\end{figure}

\begin{figure}[!ht]
\includegraphics[width=4.2cm]{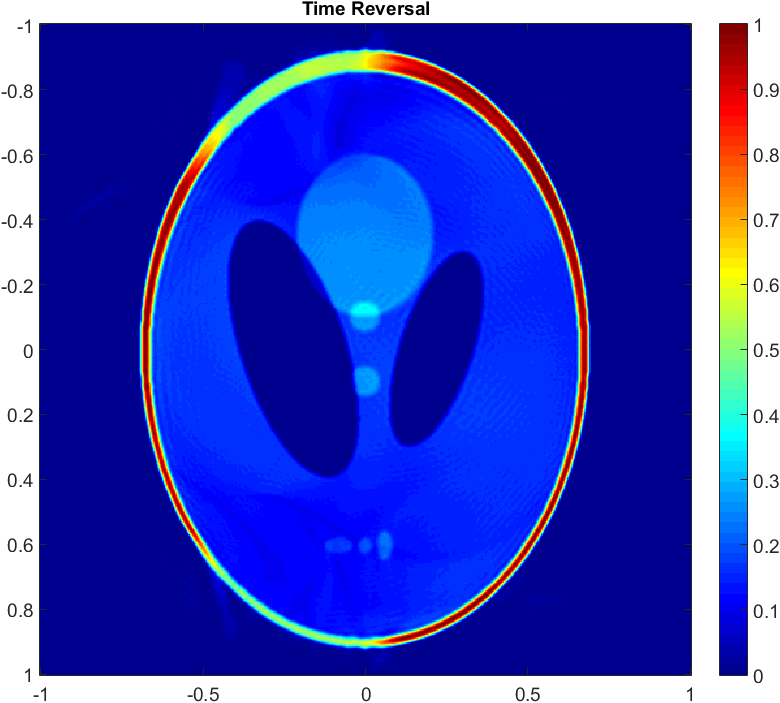}
\includegraphics[width=4.2cm]{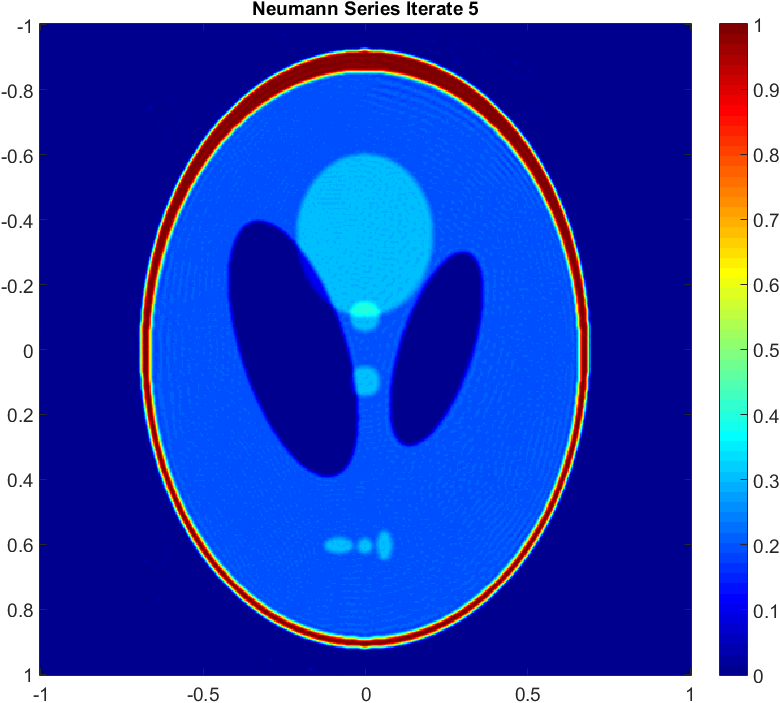}
\includegraphics[width=4.2cm]{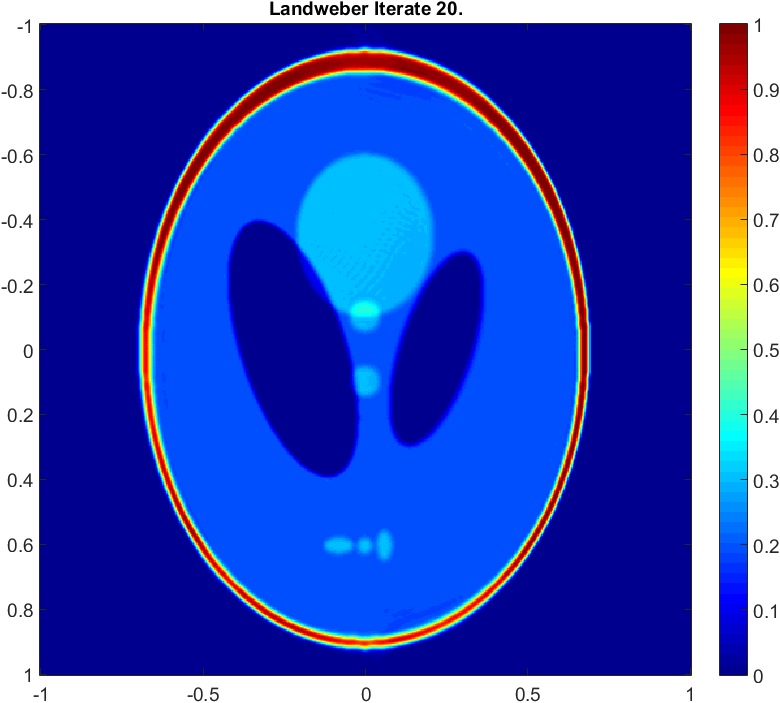}
\blue{\caption{Data: Non-trapping speed, $T=2T_0$. \blue{From left to right: Time reversal; 5th Neumann sum; Landweber iterate 20.}}}
\label{fig:nontrap_shepp}
\end{figure}
Let's first consider the case of the non-trapping speed 
\begin{align*}
c(x)=\left\{
\begin{aligned}
&1+0.2\sin(2\pi x_1)+0.1\cos(2\pi x_2),\quad x\in B_{R-\ve}\,,\\
&1\quad \text{outside }B_R\;.
\end{aligned}\right.
\end{align*}
Note that here and below, the speed of sound is smoothed out near the boundaries to satisfy our smoothness assumptions. 
The time reversal method already gives reliable reconstructions if the measurement time $T$ is sufficiently large. 
For the test example, a measurement time of $T=4T_0$ (see Remark \ref{rem:inj}) 
is enough to provide a quantitatively reasonable reconstruction by time reversal. 
This is illustrated in Figure \ref{fig:nontrap_long}.
We therefore are interested in the case where the measurement time is shorter, e.g. $T$ as near as possible at $T_0$. 
In Figure \ref{fig:nontrap_short} we compare the time reversal reconstruction and the Neumann series approach with our method, using $T=1.2T_0$.
The Landweber reconstruction is stopped at a suitable stage of iteration. In practice, the improvement is very large in the first steps, while it needs a lot of iterations to satisfy the discrepancy principle from Theorem \ref{thm:Landweber1}. 

The main differences are clearly visible to the naked eye: 
The time reversal reconstruction fails to compute the central point in the image and produces artifacts. These artifacts can be avoided by the use of the  harmonic extension as in \eqref{eq:TR}. We here present iterate $j=5$ of the Neumann series \eqref{eq:Neumann2}.
Also the Landweber reconstruction avoids to amplify the artifacts in the image center.
Moreover, it delivers the correct quantitative values of the initial pressure,
whereas time reversal underestimates these values. 
However, the smoothing step naturally included in every Landweber iteration seems to make 
this approach more stable than the time reversal Neumann series. 
In fact, at least with our method of numerical wave propagation, we have to stop the Neumann series after 
5-10 iterates before numerical errors are amplified too much. 

\begin{remark}[Convergence rates in practice]
The main difference between the convergence rates of the Neumann series and the Landweber iteration lies in the division by $c^2$ after every 
backpropagation step, indicated by \eqref{eq:adjoint}.

In Figure \ref{fig:Landweber_iterations} we show reconstructions using the non-trapping speed and measurement time $T=1.2T_0$. 
The measured error $\norm{y^\delta-Lf_k}_{L^2(\Sigma)}$ keeps decreasing till $k=50$.
However, the major visible improvements seem to occur within the first 20 iterations.
We therefore use the picture of iteration number 20 for our practical comparisons with the other methods.
\end{remark}

\subsection{Trapping sound speed}

\begin{figure}[!ht]
\includegraphics[width=4.2cm]{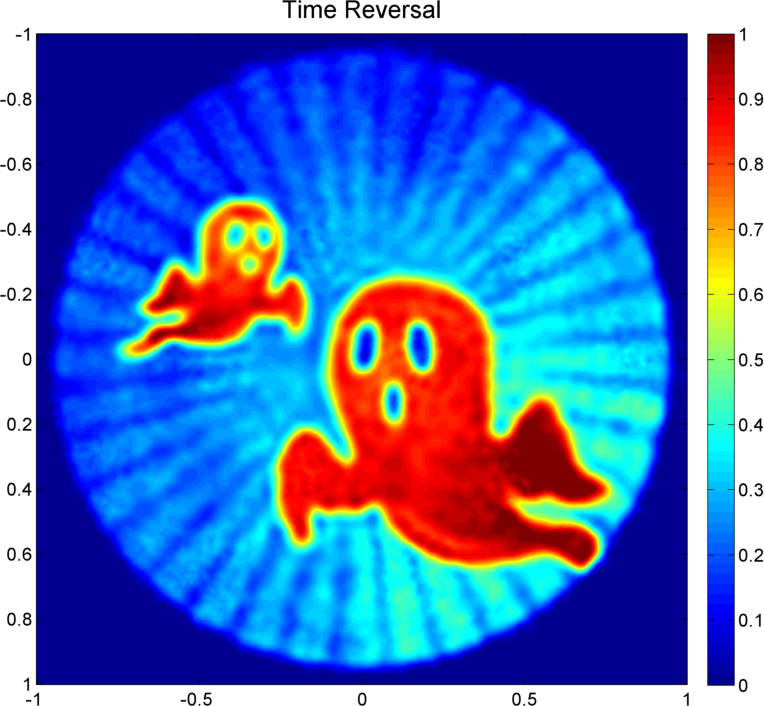}
\includegraphics[width=4.2cm]{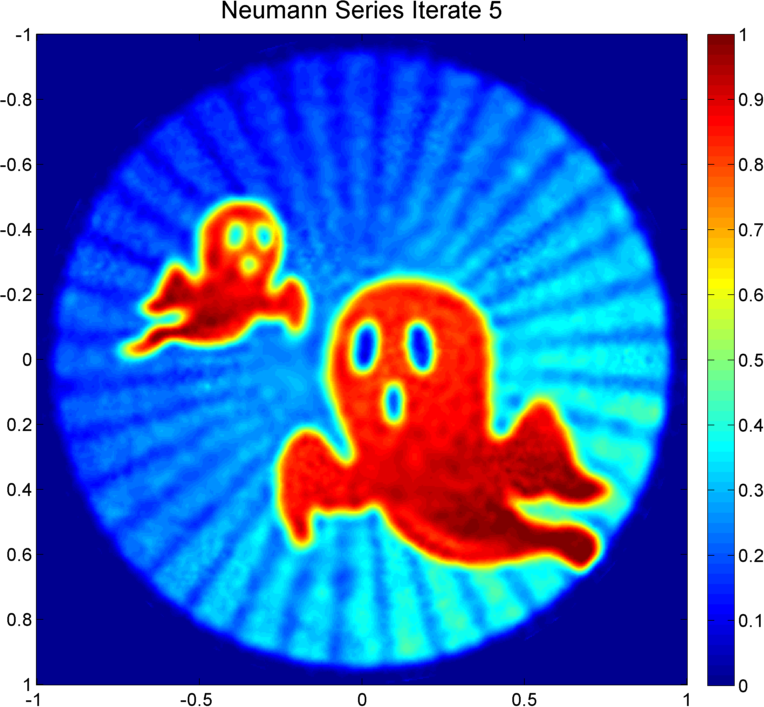}
\includegraphics[width=4.2cm]{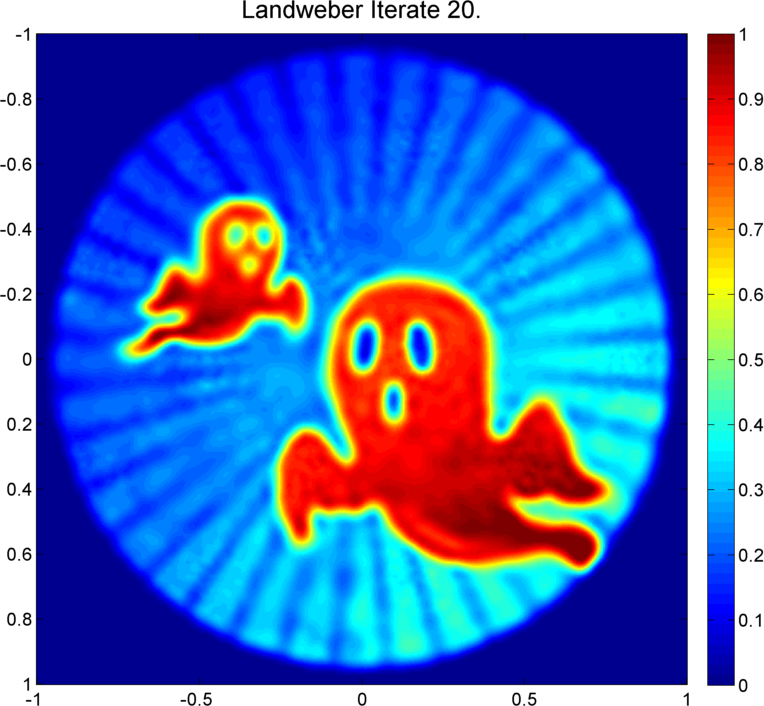}
\caption{Data: Trapping speed, $T=4T_0$. \blue{From left to right: Time reversal; 5th Neumann sum; Landweber iterate 20.}}
\label{fig:trap_long}
\end{figure}

\begin{figure}[!ht]
\includegraphics[width=4.2cm]{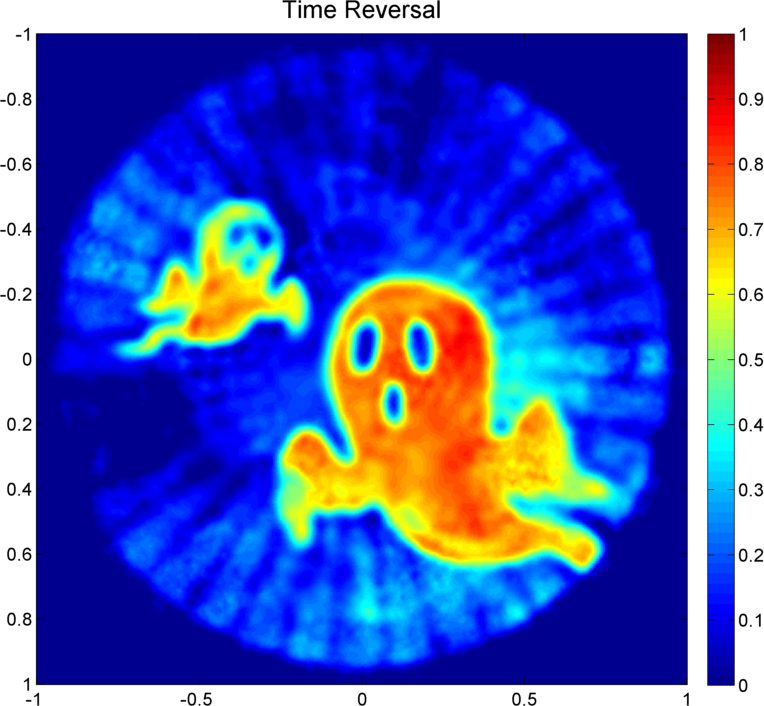}
\includegraphics[width=4.2cm]{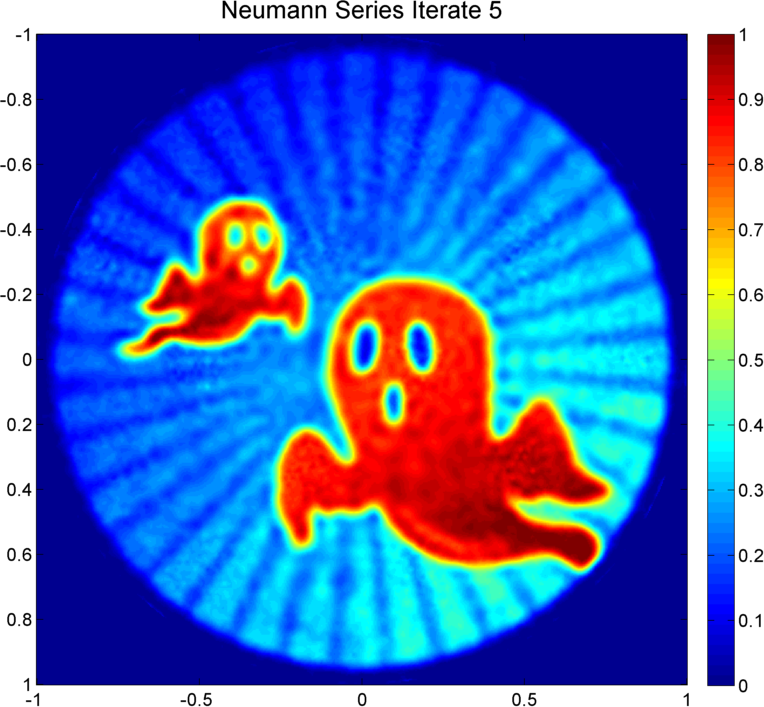}
\includegraphics[width=4.2cm]{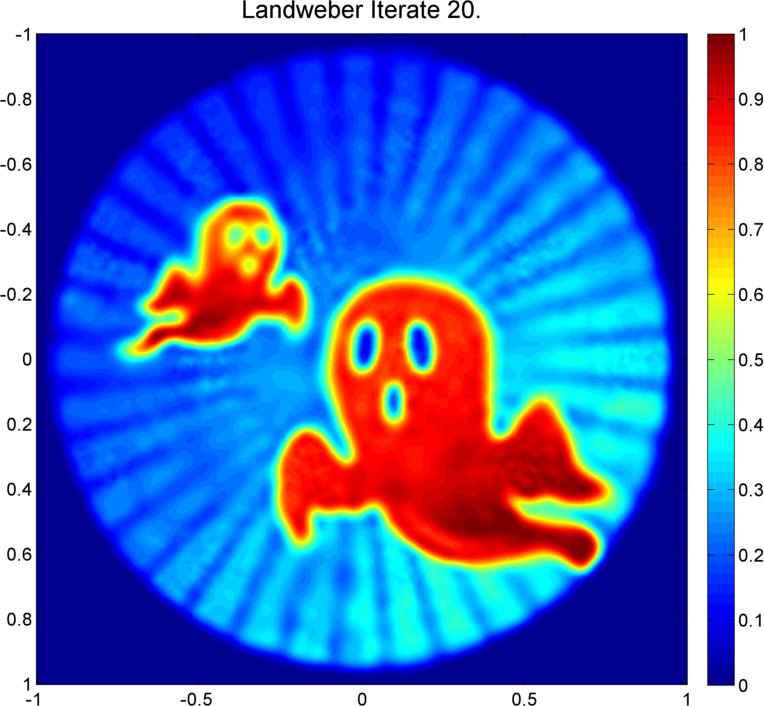}
\caption{Data: Trapping speed, $T=2T_0$. \blue{From left to right: Time reversal; 5th Neumann sum; Landweber iterate 20.}}
\label{fig:trap_mid}
\end{figure}

\begin{figure}[!ht]
\includegraphics[width=4.2cm]{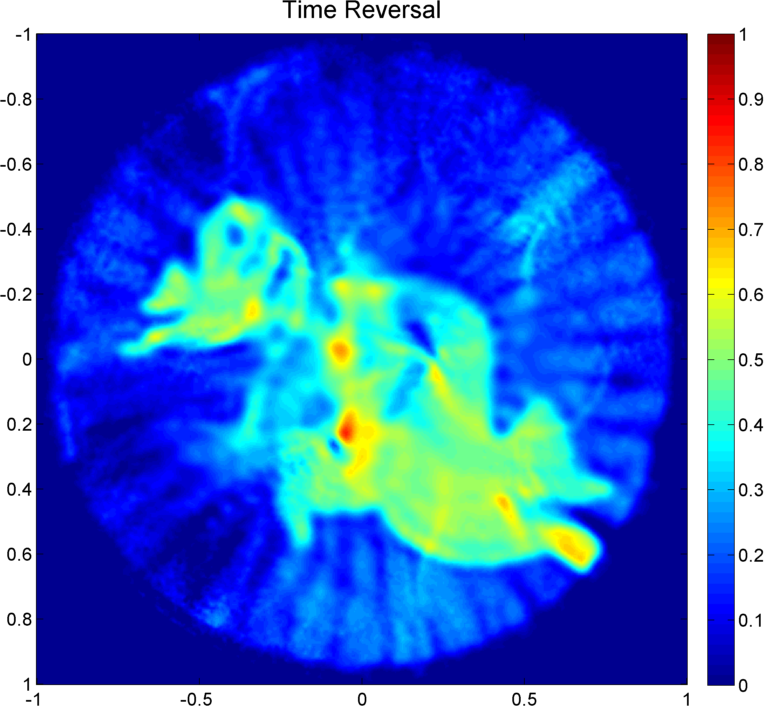}
\includegraphics[width=4.2cm]{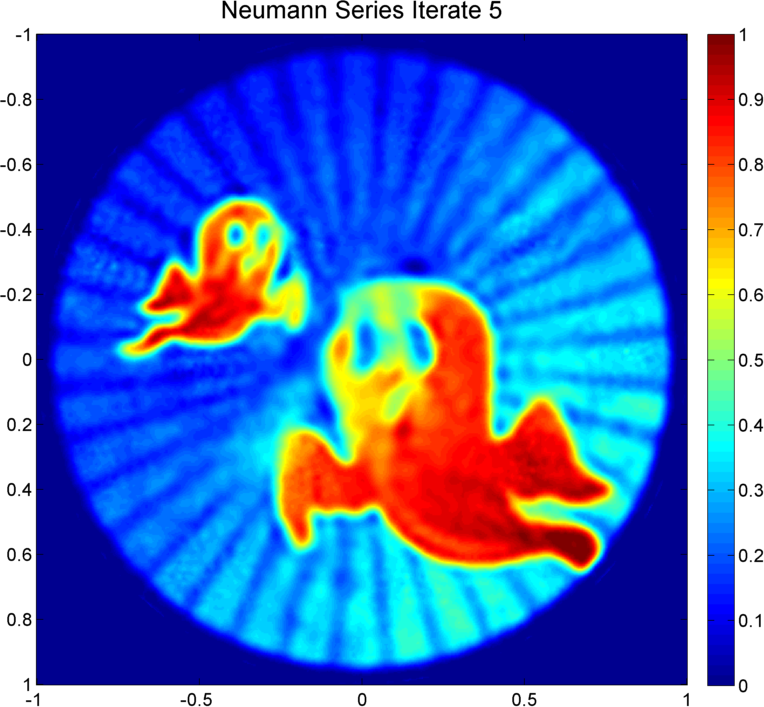}
\includegraphics[width=4.2cm]{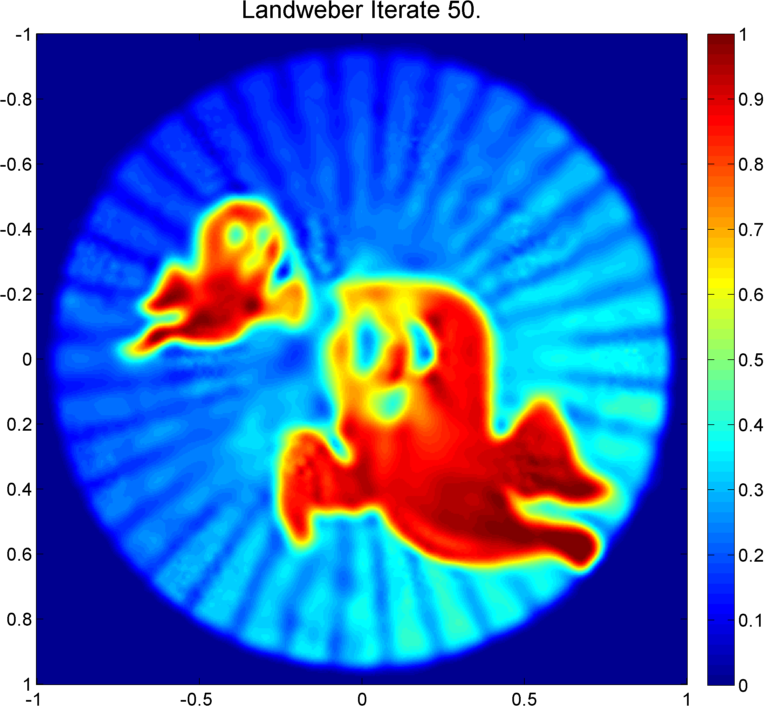}
\caption{Data: Trapping speed, $T=1.2T_0$. \blue{From left to right: Time reversal; 5th Neumann sum; Landweber iterate 50.}}
\label{fig:trap_short}
\end{figure}

In the second example, we want to deal with the trapping speed
\begin{align*}
c(x)=\left\{
\begin{aligned}
&1+0.5\sin(-3\pi x_1)\cos(3\pi x),\quad x\in B_{R-\ve}\,,\\
&1\quad \text{outside }B_R\;.
\end{aligned}\right.
\end{align*}
In this case, there are geodesics present that do not propagate singularities to the measurement surface within finite time. 
The Landweber approach is the only one that gives a theoretical convergence result in this case.
In practice, we see that conventional time reversal, at least for $T=1.2T_0$, fails to give a detailed reconstruction. The Neumann series approach and the Landweber iteration behave similarly, again with the advantage of the Neumann series giving faster convergence, whereas the Landweber gives a regularized solution that seems to be more robust against numerical errors and noise (figures \ref{fig:trap_long}, \ref{fig:trap_mid} and \ref{fig:trap_short}).

\blue{\section{Conclusions}
In this paper, an infinite dimensional Hilbert space regularization framework is established for solving the photoacoustic inverse problem with variable speed of sound. 
The introduced Landweber approach iteratively solves the least squares problem, 
a standard stopping criterion serves as regularization parameter. 
By regularization theory, convergence of the method is guaranteed. 
The reconstruction method is compared to the common state-of-the-art reconstruction techniques in the field, which are based on solving a time reversed wave equation.
It thereby proves to be a flexible and feasible alternative for computational photoacoustic tomography with variable speed.
}

\section*{Acknowledgement}
The work of TG and OS is supported by the Austrian Science Fund (FWF), Project P26687-N25 
Interdisciplinary Coupled Physics Imaging. 
\blue{The authors are grateful to the Erwin Schr\"odinger institute 
for warm hosting. Thanks go to the referees for the helpful comments and
to Roland Herzog for remarks on the initial version.
Special thanks go to Silvia Falletta for providing valuable hints concerning
details to discretization and implementation of the coupled system.}  

\def\cprime{$'$} \providecommand{\noopsort}[1]{}

\appendixpage
\appendix
\section{Well-posedness of the transmission problem}
\begin{notation}
\begin{itemize}
\item The space
      \begin{equation} \label{eq:V}
      V:= \set{ u \in H_0^1(\RR^n): u|_\Omega \in H^1(\Omega)}
      \end{equation}
      is a Hilbert space with respect to the inner product
      \begin{equation*}
       \inner{\phi_1}{\phi_2}_V = \int_\Omega \phi_1(x) \phi_2(x)\,d x + \int_{\RR^n} \nabla \phi_1 \cdot \nabla \phi_2\,dx\;.
      \end{equation*}
      The completeness follows directly from the properties of the Sobolev spaces \\
      $H_0^1(\RR^n)$ and $H^1(\Omega)$. 
      
      The dual space is denoted by $V'$.
\item \blue{Let $\mathcal{S} \subseteq \RR^n$, then 
      $C_0^\infty(0,T;\mathcal{S})$ denotes the space of functions $v \in C^\infty((0,T) \times \mathcal{S})$
      with compact support in $(0,T)$ with respect to $t$.}
\end{itemize}
\end{notation}
\label{sec:wellposedness}
\begin{definition}
For $h \in C^\infty((0,T) \times \partial \Omega)$. A weak solution $z$ of \eqref{eq:wave_adj} satisfies: 
\begin{itemize}
\item 
\begin{equation*}
  z \in L^2(0,T;V)\,,
  z' \in L^2(0,T;L^2(\RR^n))\,,
  z'' \in L^2(0,T;V'),
\end{equation*}
where $V$ is defined in \eqref{eq:V}, together with
\item $z(T)=z'(T)=0$. Note that \blue{$V \subseteq L^2(\RR^n) \subseteq V'$} and thus 
      $z \in H^1(0,T;L^2(\RR^n))$ and $z'\in H^1(0,T;V')$, such that the traces make sense.
      In fact $z(T) \in L^2(\RR^n)$ and $z'(T) \in V'$.
\item      
      \begin{equation}
      \label{eq:weak_adjoint}
       \begin{aligned}
        & \int_0^T \int_{\RR^n} \frac{1}{c^2} z''v\,dx\,dt\; + \;
        \int_0^T \int_{\RR^n} \nabla z(t) \cdot \nabla v(t)\,dx\,dt\; \\
        =& - \int_0^T \int_{\partial \Omega} h(t) v(t)\,d S(x)\,dt\;\,,
        \qquad \text{ for all } v \in L^2(0,T;V).
       \end{aligned}
       \end{equation}
       Note, that the definition of a weak solution is not used consistently in the literature. Evans \cite{Eva98}, for instance, uses 
       the {test} functions $v$ time independently, and formulates a weak form of \eqref{eq:wave_adj} for almost all $t \in (0,T)$.
\end{itemize}
\end{definition}
\begin{theorem}
For $h \in C^\infty((0,T) \times \partial \Omega)$ there exists a weak solution $z$ of \eqref{eq:wave_adj} - that is of \eqref{eq:weak_adjoint}.
\end{theorem}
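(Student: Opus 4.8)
The plan is to prove existence by the Galerkin method for second–order hyperbolic equations, in the spirit of \cite[Ch.~7]{Eva98}, after two reductions. First, the change of variables $t\mapsto T-t$ turns \eqref{eq:wave_adj} into an initial–value problem with \emph{vanishing} Cauchy data at $t=0$, the same interior equation $c^{-2}z''-\Delta z=0$ in $\RR^n\setminus\partial\Omega$, and transmission conditions $[z]=0$, $[\partial z/\partial\bm n]=h$ (the sign of $h$ flips, which is immaterial for existence). Second, the transmission conditions are built into the function spaces: asking $z(t)\in V$ already forces $[z]=0$, while integrating by parts separately over $\Omega^-$ and $\Omega^+$ shows that the interior equation together with $[\partial z/\partial\bm n]=h$ is equivalent, for $z$ with $z''\in L^2(0,T;V')$, to
\[
\Bigl\langle\tfrac1{c^2}z''(t),v\Bigr\rangle_{V',V}+\int_{\RR^n}\nabla z(t)\cdot\nabla v\,dx=-\int_{\partial\Omega}h(t)\,v\,dS(x)\qquad\text{for all }v\in V,
\]
which integrated in time is exactly \eqref{eq:weak_adjoint}. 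The right–hand side defines $F(t)\in V'$, smooth in $t$ (so also $F'\in L^2(0,T;V')$), by boundedness of the trace $\gamma$.

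Because the Cauchy data vanish and $\partial\Omega$ is bounded, finite speed of propagation lets one replace $\RR^n$ by a ball $B_\rho$ with $\rho>\diam(\Omega)+c_{\mathrm{max}}T$, carrying homogeneous Dirichlet data on $\partial B_\rho$: the solution on $[0,T]$ is unchanged, and the analogue of $V$ on $B_\rho$ coincides (with equivalent norm) with $H_0^1(B_\rho)$, which embeds compactly into $L^2(B_\rho)$. Hence there is a basis $\{w_k\}$ that is simultaneously orthogonal in $L^2(c^{-2}\,dx)$ and in $V$ (eigenfunctions of the operator associated with this pair of inner products), exactly as in \cite{Eva98}. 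For each $m$ one seeks $z_m(t)=\sum_{k\le m}d_k^m(t)w_k$ with $z_m(0)=z_m'(0)=0$, solving the finite system obtained by testing the weak form against $w_1,\dots,w_m$; since the mass matrix $(\int c^{-2}w_jw_k)$ is positive definite and $F$ is smooth in $t$, this linear ODE system is uniquely solvable on $[0,T]$.

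The core of the argument is a pair of $m$–uniform a priori estimates. Testing with $z_m'$ yields the energy identity $\tfrac12\tfrac{d}{dt}\bigl(\|c^{-1}z_m'\|_{L^2}^2+\|\nabla z_m\|_{L^2}^2\bigr)=-\int_{\partial\Omega}h\,z_m'\,dS$. The forcing lives on $\partial\Omega$, not in $L^2$, so its contribution cannot be absorbed by Young's inequality directly; instead I would integrate it by parts in time, $-\int_0^t\!\!\int_{\partial\Omega}h z_m'=-\int_{\partial\Omega}h(t)z_m(t)+\int_0^t\!\!\int_{\partial\Omega}h' z_m$, bound the surface terms by $\mathcal{C}_\gamma\|h(t)\|_{L^2(\partial\Omega)}\|z_m(t)\|_{H^1(\Omega)}$ via the trace theorem, and use $z_m(t)=\int_0^t z_m'(s)\,ds$ (here the vanishing initial datum is essential) to estimate $\|z_m(s)\|_{L^2(\Omega)}\le c_{\mathrm{max}}\sqrt{s}\,\bigl(\int_0^s E_m\bigr)^{1/2}$ in terms of the energy $E_m$; a Grönwall/absorption argument then gives $\sup_{t\in[0,T]}E_m(t)\le C$ uniformly in $m$, so $z_m$ is bounded in $L^\infty(0,T;V)$ and $z_m'$ in $L^\infty(0,T;L^2(\RR^n))$. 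For the second estimate one tests with an arbitrary $v\in V$, replaces it by its $L^2(c^{-2}\,dx)$–projection $P_m v$ onto $V_m$ (legitimate since $z_m''\in V_m$), and uses $\|P_m v\|_V\le\|v\|_V$ for the chosen basis; this bounds $z_m''$ in $L^2(0,T;V')$.

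Finally, weak–$*$ compactness extracts a subsequence with $z_m\rightharpoonup z$ in $L^\infty(0,T;V)$, $z_m'\rightharpoonup z'$ in $L^\infty(0,T;L^2(\RR^n))$ and $z_m''\rightharpoonup z''$ in $L^2(0,T;V')$. Passing to the limit in the Galerkin equations against a fixed test function $\varphi(t)w_k$, then using density of $\bigcup_m V_m$ in $V$, shows that $z$ satisfies \eqref{eq:weak_adjoint}; the regularity required of a weak solution is then in hand, and in particular $z\in C([0,T];L^2(\RR^n))$, $z'\in C([0,T];V')$, so $z(0)=z'(0)=0$ can be recovered (by passing to the limit, or by the standard computation integrating the weak form twice in time against a test function vanishing at $t=0$). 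Undoing $t\mapsto T-t$ returns $z(T)=z'(T)=0$. I expect the principal difficulty to be exactly the non-$L^2$ nature of the forcing — a distribution carried by $\partial\Omega$ — which forces the refined energy estimate via time integration by parts and the trace inequality, together with the bookkeeping needed to handle the unbounded spatial domain, for which the finite–speed–of–propagation reduction to a ball is the convenient remedy.
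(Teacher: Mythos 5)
Your proposal follows essentially the same route as the paper's appendix proof: a Galerkin approximation tested with $z_m'$, with the boundary forcing integrated by parts in time and controlled through the trace inequality, followed by uniform energy bounds, a dual estimate on $z_m''$, and weak compactness. The only deviations are cosmetic or organisational --- you reverse time and work from vanishing data at $t=0$, you reduce to a large ball by finite speed of propagation in order to justify the simultaneously orthogonal basis (a point the paper glosses over on $\RR^n$), and you bound $\norm{z_m(t)}_{L^2(\Omega)}$ via $z_m(t)=\int_0^t z_m'(s)\,ds$ where the paper instead combines a mean-value Poincar\'e inequality with the vanishing of $z_m$ at the final time.
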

\begin{proof}
The proof is similar to \cite[Section 7.2.2]{Eva98}.
\begin{enumerate}
 \item First we construct a Galerkin-approximation:
       Let $\{w_k\}_{k=1}^\infty$ be an orthonormal basis of $L^2(\RR^n)$, which is an orthogonal basis for $H_0^1(\RR^n)$.
       For fixed $m$ let 
       \begin{equation*}
         z_m(t)=\sum_{k=1}^m d_m^k(t) w_k\,,\qquad t \in (0,T)\;,
       \end{equation*}
       where 
       \begin{equation}
       \label{eq:init}
        d_m^k(T)= 0\,, d_m^k{}'(T)= 0 \text{ meaning that } z_m(T)=z_m'(T)=0\;.
       \end{equation}
       and 
       \begin{equation}\label{eq:Galerkin}
       \begin{aligned}
        ~ & \inner{\frac{z_m''}{c^2}}{w_k}_{L^2(\RR^n)} + \inner{z_m}{w_k}_{H_0^1(\RR^n)} = \inner{h}{w_k}_{L^2(\Sigma)}\,, \\
          & \text{ for all } 0< t < T\,, \text{ for all } k=1,\ldots,m\;.
       \end{aligned}
       \end{equation}
       Analogously to \cite[Theorem 1, Section 7.2.2]{Eva98} the Galerkin approximation can be shown.
 \item The following estimates are different to \cite[Theorem 1, Section 7.2.2]{Eva98} and thus included - it is essential 
       to consider an additional time integration. 
       As in \cite{Eva98} we use $z_m'(t)=\sum_{k=1}^m d_m^k{}'(t) w_k$ as a test function in the Galerkin approximation. 
       However, we do not apply it pointwise for every $\tau \in (0,T)$, but in integrated form. 
       Then from \eqref{eq:Galerkin} it follows that 
       \begin{equation*}
        \begin{aligned}
         \int_\tau^T\int_{\RR^n}\frac{z_m''(\hat{t})}{c^2} z_m'(\hat{t})+\nabla z_m\cdot\nabla z_m'dxd\hat{t} &=-\int_{\d\Omega}\int_\tau^T h(\hat{t})z_m'(\hat{t})d\hat{t}d S(x)\\
         & \text{ for all } \tau \in (0,T)\;.
        \end{aligned}
       \end{equation*}
       Partial integration of the right hand side and evaluation of the integral terms on the left hand gives
       \begin{equation}
       \label{eq:basic}
        \begin{aligned}
        & \int_\tau^T\frac{d}{d\hat{t}}\left(\norm{\frac{z_m'(\hat{t})}{c^2}}^2_{L^2(\RR^n)}+\norm{z_m(\hat{t})}_{H_0^1(\RR^n)}^2\right) d\hat{t}\\
        = & 2\left(\int_{\d\Omega}\int_\tau^T h'(\hat{t})z_m(\hat{t})\,d\hat{t} dS(x) - \int_{\partial \Omega} h(\tau)z_m(\tau)dS(x)\right)\;.
        \end{aligned}
       \end{equation}
       Since $z_m(T)=z_m'(T)=0$, the left hand side equals 
       \begin{equation*}
        -(\norm{z_m'(\tau)}^2_{L^2(\RR^n)}+\norm{z_m(\tau)}_{H_0^1(\RR^n)}^2)\;.
       \end{equation*}
       Estimating the right hand side by Cauchy-Schwarz-inequality and using mean inequality, 
       we get for an arbitrary $D >0$:
       \begin{equation*}
        \begin{aligned}
        & \norm{\frac{z_m'(\tau)}{c^2}}^2_{L^2(\RR^n)}+\norm{z_m(\tau)}_{H_0^1(\RR^n)}^2\\
        \leq &\frac{1}{D^2}\left(\int_\tau^T \norm{h'(\hat{t})}_{L^2(\d\Omega)}^2d\hat{t}+\norm{h(\tau)}_{L^2(\d\Omega)}^2\right)\\ 
        &\quad + D^2\left(\int_\tau^T\norm{z_m(\hat{t})}_{L^2(\d\Omega)}^2 d\hat{t}+\norm{z_m(\tau)}_{L^2(\d\Omega)}^2\right)\;.
        \end{aligned}
       \end{equation*}
       Let 
       \begin{equation*}
       \begin{aligned}
          \mathcal{C}(h,\tau) &:= \int_\tau^T \norm{h'(\hat{t})}_{L^2(\d\Omega)}^2d\hat{t}+\norm{h(\tau)}_{L^2(\d\Omega)}^2\,,\\
          \hat{\mathcal{C}}(h,t) &:= \int_t^T \mathcal{C}(h,\tau) \tau\,,\\
       \end{aligned}
       \end{equation*}
       and using the trace theorem \eqref{eq:norm} it follows that 
       \begin{equation*}
        \begin{aligned}
        & \norm{\frac{z_m'(\tau)}{c^2}}^2_{L^2(\RR^n)}+\norm{z_m(\tau)}_{H_0^1(\RR^n)}^2\\
        \leq & \frac{1}{D^2} \mathcal{C}(h,\tau)
        + D^2C_\gamma^2\left(\int_\tau^T\norm{z_m(\hat{t})}_{H^1(\Omega)}^2 d\hat{t}+\norm{z_m(\tau)}_{H^1(\Omega)}^2\right)\;.
        \end{aligned}
       \end{equation*}
       By integrating of $\tau$ over $[t,T]$ we get the following estimate for all $t \in (0,T)$:
       \begin{equation}
       \label{eq:energy}
        \begin{aligned}
        & \int_t^T \norm{\frac{z_m'(\tau)}{c^2}}^2_{L^2(\RR^n)}+\norm{z_m(\tau)}_{H_0^1(\RR^n)}^2 d\tau\\
        \leq&  \frac{1}{D^2} \hat{\mathcal{C}}(h,t)
        + D^2C_\gamma^2\int_t^T \left(\int_\tau^T\norm{z_m(\hat{t})}_{H^1(\Omega)}^2 d\hat{t}+\norm{z_m(\tau)}_{H^1(\Omega)}^2\right)\,d\tau\;.
        \end{aligned}
       \end{equation}
       In addition, because $z_m(T)=0$, we know that
       \begin{equation}
       \label{eq:en2}
       \begin{aligned}
       \left(\int_{\Omega} z_m(\tau)dx\right)^2 &= \left(\int_{\Omega} z_m(\tau)-z_m(T)dx\right)^2 \\
       &= \left( \int_\tau^T \int_{\Omega} z_m'(\hat{t})\,d x d\hat{t} \right)^2\\
       &\leq T \blue{|\Omega| c_\text{max}^4 }
       \int_\tau^T \int_{\Omega} \left(\frac{z_m'(\hat{t})}{c^2}\right)^2\,d x d\hat{t}\\ 
       &= T \blue{|\Omega|c_\text{max}^4}   \int_\tau^T \norm{\frac{z_m'(\hat{t})}{c^2}}^2_{L^2(\RR^n)}d\hat{t}\;.
       \end{aligned}
       \end{equation}
       Moreover, we know from the inequality from \blue{\cite[5.8, Theorem 1]{Eva98}} that 
       \begin{equation*}
        \norm{z_m(\tau) - \blue{\frac1{|\Omega|}}\int_{\Omega} z_m(\tau)dx}_{L^2(\Omega)} \leq C_G \norm{z_m(\tau)}_{H_0^1(\RR^n)}\,,
       \end{equation*}
       which implies that for all $\tau \in (0,T)$:
       \begin{equation*}
        \norm{z_m(\tau)}_{L^2(\Omega)}^2 \leq \blue{\frac2{|\Omega|}} \left(\int_{\Omega} z_m(\tau)dx\right)^2 + 2C_G^2 \norm{z_m(\tau)}_{H_0^1(\RR^n)}^2\,,
       \end{equation*}
       and consequently
       \begin{equation*}
        \norm{z_m(\tau)}_{H^1(\Omega)}^2 \leq \blue{\frac2{|\Omega|}} \left(\int_{\Omega} z_m(\tau)dx\right)^2 + (2C_G^2+1) \norm{z_m(\tau)}_{H_0^1(\RR^n)}^2\;.
       \end{equation*}
       Thus it follows from \eqref{eq:energy} that
       \begin{equation}
       \label{eq:energy1}
        \begin{aligned}
        & \int_t^T \norm{\frac{z_m'(\tau)}{c^2}}^2_{L^2(\RR^n)}+\norm{z_m(\tau)}_{H_0^1(\RR^n)}^2 d\tau\\
        \leq&  \frac{1}{D^2} \hat{\mathcal{C}}(h,t) \\
        &
        + D^2C_\gamma^2(2C_G+1)\int_t^T \left(\int_\tau^T\norm{z_m(\hat{t})}_{H_0^1(\RR^n)}^2 d\hat{t} +\norm{z_m(\tau)}_{H_0^1(\RR^n)}^2\right)\,d\tau \\
        & \qquad + \blue{\frac{2D^2C_\gamma^2 (T+1)}{|\Omega|}} \int_0^T \left(\int_{\Omega} z_m(\tau)dx\right)^2 d\tau\;.
        \end{aligned}
       \end{equation}
       Taking $t=0$ then gives 
       \begin{equation}
       \label{eq:energy1a}
        \begin{aligned}
        & \int_0^T \norm{\frac{z_m'(\tau)}{c^2}}^2_{L^2(\RR^n)}+\norm{z_m(\tau)}_{H_0^1(\RR^n)}^2 d\tau\\
        \leq&  \frac{1}{D^2} \hat{\mathcal{C}}(h,t)\\
        & + D^2C_\gamma^2(2C_G+1) (T+1) \int_0^T\norm{z_m(\hat{t})}_{H_0^1(\RR^n)}^2 d\hat{t}\\
        & \qquad + \blue{\frac{2D^2C_\gamma^2 (T+1)}{|\Omega|}} \int_0^T \left(\int_{\Omega} z_m(\tau)dx\right)^2 d\tau\;.
        \end{aligned}
       \end{equation}
      Thus it follows from \eqref{eq:en2} that 
       \begin{equation*}
        \begin{aligned}
          ~ & \frac{1}{2 T\blue{|\Omega|  c_\text{max}^4}} \int_0^T \left(\int_{\Omega} z_m(\tau)dx\right)^2\,d\tau + \\
            & \quad
              \frac{1}{2} \int_0^T \norm{\frac{z_m'(\tau)}{c^2}}^2_{L^2(\RR^n)}+\norm{z_m(\tau)}_{H_0^1(\RR^n)}^2 d\tau\\
        \leq& \frac{1}{D^2} \hat{\mathcal{C}}(h,t)\\
            &   + D^2C_\gamma^2(2C_G+1) (T+1) \int_0^T\norm{z_m(\hat{t})}_{H_0^1(\RR^n)}^2 d\hat{t}\\
        & \qquad + \blue{\frac{2D^2C_\gamma^2 (T+1)}{|\Omega|}} \int_0^T \left(\int_{\Omega} z_m(\tau)dx\right)^2 d\tau\;.
        \end{aligned}
       \end{equation*}
        Choosing $D$ such that 
        \begin{equation*}
         \max \set{ D^2 C_\gamma^2 (2C_G+1) (T+1), 2\blue{|\Omega|}D^2C_\gamma^2 (T+1)} \leq \frac{1}{2}\,,
        \end{equation*}
        provides that 
        \begin{equation}
       \label{eq:energy2}
        \begin{aligned}
          ~ & \frac{1}{4 T\blue{|\Omega|  c_\text{max}^4}} \int_0^T \left(\int_{\Omega} z_m(\tau)dx\right)^2\,d\tau + \\
            & \frac{1}{4} \int_0^T \norm{\frac{z_m'(\tau)}{c^2}}^2_{L^2(\RR^n)}+\norm{z_m(\tau)}_{H_0^1(\RR^n)}^2 d\tau\\
        \leq& \frac{1}{D^2} \hat{\mathcal{C}}(h,t)\;.
        \end{aligned}
       \end{equation}
       Therefore there exists a constant $\mathcal{C}$ such that 
       \begin{equation}
        \label{eq:uniform}
        \max \set{ \norm{z_m'}_{L^2(0,T;\RR^n)}, 
                   \norm{z_m}_{L^2(0,T;H_0^1(\RR^n))},
                   \norm{z_m|_\Omega}_{L^2(0,T;H^1(\Omega))}
           } \leq \mathcal{C}\,,
       \end{equation}
       where the last estimate is again due to inequality \blue{\cite[5.8, Theorem 1]{Eva98}}, already used above.

       Because $\set{w_k}$ is a basis it follows from \eqref{eq:Galerkin} that for all $v \in L^2(0,T;V)$:
       \begin{equation*}
        \sup_v \inner{\frac{z_m''}{c^2}}{v}_{L^2(0,T;\RR^n)} 
        \leq 
        \sup_v \inner{z_m}{v}_{H_0^1(0,T;\RR^n)} + \sup_v \inner{h}{v}_{L^2(\Sigma)} \leq C(h,\mathcal{C})\,,
       \end{equation*}
       where $C(h,\mathcal{C})$ is constant depending on $h$ and $\mathcal{C}$, but is not dependent on $m$.
       This shows that $\blue{z_m''} \in L^2(0,T;V')$.
 \item \eqref{eq:energy2} guarantees that $\set{z_m}$ is uniformly bounded in $L^2(0,T;H_0^1(\RR^n))$,
       and\\ $\set{z_m|_\Omega}$ is uniformly bounded in $L^2(0,T;H^1(\Omega))$.
       Moreover, $\set{z_m'}$ is uniformly bounded in $L^2(0,T;L^2(\RR^n))$, and 
       $\set{z_m''}$ is uniformly bounded in $L^2(0,T;V')$.
       Thus it has a weakly convergent subsequence, which we again denote by $\set{z_m}$ which is 
       converging weakly in the following sense:
       \begin{equation*}
        \begin{aligned}
         \set{z_m} \rightharpoonup z  \text{ with respect to } L^2(0,T;H_0^1(\RR^n))\,,\\
         \set{z_m|_\Omega} \rightharpoonup z|_\Omega  \text{ with respect to } L^2(0,T;H^1(\Omega))\,,\\
         \set{z_m'} \rightharpoonup \psi  \text{ with respect to } L^2(0,T;L^2(\RR^n))\,,\\
         \set{z_m''} \rightharpoonup \rho  \text{ with respect to } L^2(0,T;V')\;.\\
        \end{aligned}
       \end{equation*}
       We note that the trace operator $\gamma:H^1(\Omega) \to L^2(\partial \Omega)$ is bounded. Therefore, every 
       test function $v \in L^2(0,T;V)$ satisfies $v|_{\partial \Omega} \in L^2(0,T;L^2(\partial \Omega))$.
       Consequently,
       \begin{equation*}
        \int_0^T \int_{\partial \Omega} z_m v\,dS(x),dt  \to \int_0^T \int_{\partial \Omega} z v\,dS(x),dt\;.
       \end{equation*}
       Now, let $v \in C_0^\infty((0,T) \times \RR^n)$, then $v', v'' \in C_0^\infty((0,T) \times \RR^n)$ as well, such that 
       \begin{equation*}
       \begin{aligned}
        \int_0^T \int_\Omega \psi v dx dt &= \lim \int_0^T \int_\Omega z_m' v dx dt \\
        &= - \lim \int_0^T \int_\Omega z_m v' dx dt \\
        &= \lim \int_0^T \int_\Omega z v' dx dt \\
        &= \lim \int_0^T \int_\Omega z' v dx dt\,,
       \end{aligned}
       \end{equation*}
       which implies that $\psi = z'$. Analogously one can show that $z_m''\to z''$ in $L^2(0,T;V')$.
       Therefore, $z$ it is a solution of \eqref{eq:weak_adjoint}.
\end{enumerate}
\end{proof}
\begin{theorem}
 The solution of \eqref{eq:wave_adj} is unique in $L^2(0,T;V)$.
\end{theorem}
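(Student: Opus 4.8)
The plan is to run the classical energy argument for uniqueness of weak solutions of second order hyperbolic equations (\cite[Section 7.2.2]{Eva98}), adapted to the facts that here the Cauchy data are prescribed at the final time $T$, that the equation carries the weight $\tfrac1{c^2}$, and that the spatial domain is unbounded.

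By linearity it suffices to show that the homogeneous problem has only the trivial solution. So let $z\in L^2(0,T;V)$ be a weak solution of \eqref{eq:wave_adj} with $h\equiv 0$; then $z'\in L^2(0,T;L^2(\RR^n))$, $z''\in L^2(0,T;V')$, $z(T)=z'(T)=0$, and \eqref{eq:weak_adjoint} holds with right hand side $0$ for every $v\in L^2(0,T;V)$. For a fixed $s\in(0,T)$ I would test with
\begin{equation*}
v(t):=
\begin{cases}
0\,, & 0\le t\le s\,,\\[1mm]
\displaystyle\int_s^t z(r)\,dr\,, & s<t\le T\,.
\end{cases}
\end{equation*}
Since $z\in L^2(0,T;V)$, this $v$ belongs to $C([0,T];V)\cap H^1(0,T;V)\subseteq L^2(0,T;V)$, so it is admissible, and it satisfies $v(s)=0$ and $v'=z$ on $(s,T)$.

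Inserting $v$ into \eqref{eq:weak_adjoint} and using that $v\equiv 0$ on $(0,s)$, only the interval $(s,T)$ contributes. For the term with $z''$ I integrate by parts in time: $z'\in H^1(0,T;V')$ (since $z'\in L^2(0,T;L^2(\RR^n))$, hence $z'\in L^2(0,T;V')$, and $z''\in L^2(0,T;V')$), while $\tfrac1{c^2}v\in H^1(0,T;V)$ because multiplication by $\tfrac1{c^2}$, which is $C^1$ on $\overline\Omega$ and $\equiv 1$ outside $\Omega$, maps $V$ boundedly into $V$; hence $t\mapsto\langle z'(t),\tfrac1{c^2}v(t)\rangle_{V',V}$ is absolutely continuous with the Leibniz derivative. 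Integrating from $s$ to $T$, the boundary terms vanish thanks to $z'(T)=0$ and $v(s)=0$, and using $v'=z$ on $(s,T)$, $z(T)=0$, and that $t\mapsto\int_{\RR^n}\tfrac1{c^2}z(t)^2\,dx$ is absolutely continuous with derivative $2\int_{\RR^n}\tfrac1{c^2}z'(t)z(t)\,dx$ --- the standard lemma \cite[Section 5.9.2, Theorem 3]{Eva98} applied to $z/c\in L^2(0,T;V)$ for the Gelfand triple $V\hookrightarrow L^2(\RR^n)\hookrightarrow V'$ --- one gets
\begin{equation*}
\int_0^T\int_{\RR^n}\frac1{c^2}z''v\,dx\,dt=-\int_s^T\int_{\RR^n}\frac1{c^2}z'(t)z(t)\,dx\,dt=\frac12\int_{\RR^n}\frac1{c^2}z(s)^2\,dx\,.
\end{equation*}
For the second term, $\nabla v\in H^1(s,T;L^2(\RR^n)^n)$ with $\tfrac{d}{dt}\nabla v=\nabla z$ on $(s,T)$, so, using $v(s)=0$ once more,
\begin{equation*}
\int_0^T\int_{\RR^n}\nabla z\cdot\nabla v\,dx\,dt=\frac12\,\|\nabla v(T)\|_{L^2(\RR^n)}^2\,.
\end{equation*}
Hence \eqref{eq:weak_adjoint} with $h\equiv 0$ collapses to
\begin{equation*}
\frac12\int_{\RR^n}\frac1{c^2}z(s)^2\,dx+\frac12\,\|\nabla v(T)\|_{L^2(\RR^n)}^2=0\,.
\end{equation*}
Both summands are nonnegative, so both vanish; since $c>0$, $z(s)=0$ in $L^2(\RR^n)$. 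As $s$ was an arbitrary point of $(0,T)$ (outside a null set), $z\equiv 0$ in $L^2(0,T;V)$, which is the assertion.

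The main obstacle, as always with this argument, is not the algebra but justifying the time integration by parts and the differentiation of the energy quantities at the low regularity available ($z'$ only square integrable in space, $z''$ only $V'$-valued). This is handled in the standard way: $t\mapsto\langle z'(t),\tfrac1{c^2}v(t)\rangle$ is a fixed continuous bilinear pairing evaluated along two absolutely continuous Banach-space-valued curves, so the product rule holds verbatim, and the differentiation of $t\mapsto\|z(t)/c\|_{L^2(\RR^n)}^2$ is covered by the Lions--Magenes/Evans lemma for the triple $V\hookrightarrow L^2(\RR^n)\hookrightarrow V'$. Beyond \cite[Section 7.2.2]{Eva98} one only has to note that the bounded weight $1/c^2$, being $C^1$ on $\overline\Omega$ and $\equiv 1$ outside $\Omega$, does not disturb these manipulations, and that the transmission conditions $[z]=0$, $[\partial z/\partial\bm n]=h=0$ are already encoded in $z\in L^2(0,T;V)$ and in \eqref{eq:weak_adjoint}, so they need no separate treatment; alternatively, the substitution $t\mapsto T-t$ turns \eqref{eq:wave_adj} with $h\equiv 0$ into the homogeneous forward Cauchy problem with zero data at $t=0$, to which \cite[Section 7.2.2]{Eva98} applies directly.
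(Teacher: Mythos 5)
Your proof is correct, and at its core it is the same energy argument the paper uses: reduce to the homogeneous problem by linearity and show that conservation of the natural energy forces the trivial solution. The difference is in rigor. The paper's proof is two sentences: it forms the difference $z_1-z_2$ and appeals to the energy identity \eqref{eq:E}, which was derived earlier by differentiating $E(t)$ and using the PDE pointwise --- a computation that is only formal for a weak solution with $z''\in L^2(0,T;V')$ and $z'$ merely in $L^2(0,T;L^2(\RR^n))$, since one cannot test the weak formulation with $z'$ directly. Your version supplies exactly the missing justification: the Evans-style integrated test function $v(t)=\int_s^t z(r)\,dr$ (adapted to final-time data), which is admissible in $L^2(0,T;V)$, together with the Lions--Magenes differentiation lemma for the triple $V\hookrightarrow L^2(\RR^n)\hookrightarrow V'$ and the observation that the time-independent weight $1/c^2$ is a bounded multiplier on $V$. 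The resulting identity $\tfrac12\|z(s)/c\|_{L^2}^2+\tfrac12\|\nabla v(T)\|_{L^2}^2=0$ gives $z(s)=0$ for a.e.\ $s$, which is the assertion. So you have not taken a different route, but you have turned the paper's sketch into a proof that actually works at the regularity level at which existence was established; the only caveat, inherited from the paper rather than introduced by you, is the embedding $V\subseteq L^2(\RR^n)$ used for the Gelfand triple, which rests on the paper's (nonstandard) definition of $H_0^1(\RR^n)$.
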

\begin{proof}
Let us assume that there exist two solutions $z_1,z_2$ of \eqref{eq:wave_adj}.
Then 
 \begin{equation}\label{eq:wave_adj_diff}
 \begin{aligned}
    \frac1{c^2}\, (z_1-z_2)'' - \Delta (z_1-z_2) &=0 \text{ in } \RR^n \backslash \partial \Omega \times (0,T)\,,\\
          (z_1-z_2)(T) = (z_1-z_2)'(T) &=0  \text{ in } \RR^n, \\
          \left[ (z_1-z_2) \right] =0\,, \quad
        \left[  \frac{\partial (z_1-z_2)}{\partial \bm n} \right] &= 0 \text{ on } \partial \Omega \times (0,T)\;.
\end{aligned}
\end{equation}
But this solution has only a trivial solution, which one sees from \eqref{eq:E} and by noting the boundary conditions at $\infty$ 
of an $H_0^1(\RR^n)$ function.
\end{proof}

\end{document}